\documentclass[11pt,a4paper,english]{article}

\usepackage[T1]{fontenc}
\usepackage[utf8]{inputenc}
\usepackage{babel}
\usepackage[final]{microtype} 
\usepackage{lmodern}

\usepackage[a4paper, hmargin={2.7cm,2.7cm},vmargin={3.3cm,3.3cm}]{geometry}
\usepackage{fancyhdr, lastpage}

\usepackage{mathtools} 
\usepackage{amssymb, amsfonts, amsthm, amsxtra, bm}
\usepackage{mathrsfs}  
\usepackage{dsfont}    

\usepackage{graphicx}
\graphicspath{{Figures/}}
\usepackage{tikz}
\usepackage{pgfplots}
\usepgfplotslibrary{fillbetween}
\pgfplotsset{compat=newest}

\definecolor{sienna}{RGB}{160,82,45}
\definecolor{violet}{RGB}{238,130,238}
\definecolor{lightorange}{RGB}{253,208,162}

\usepackage{caption}
\usepackage[margin=0.2cm]{subcaption}
\usepackage{float}

\usepackage{enumitem} 
\setlist{topsep=0.4em, partopsep=0.2em, itemsep=0.1em, parsep=0.05em}
\usepackage{url}
\usepackage{siunitx}
\usepackage{etoolbox} 
\usepackage{cite}

\usepackage[bf,compact,small]{titlesec}
\titlespacing*{\section}{0pt}{14pt}{4pt}
\titlespacing*{\subsection}{0pt}{8pt}{3pt}

%

\makeatletter
\patchcmd{\ttlh@hang}{\parindent\z@}{\parindent\z@\leavevmode}{}{}
\patchcmd{\ttlh@hang}{\noindent}{}{}{}
\makeatother

\pgfplotsset{every axis/.append style={
    axis x line=middle,
    axis y line=middle,
    axis line style={->},
    xlabel style={at={(ticklabel* cs:1)},anchor=north west},
}}



\DeclarePairedDelimiter{\itvcc}{\lbrack}{\rbrack}      
\DeclarePairedDelimiter{\itvoc}{\lparen}{\rbrack}      
\DeclarePairedDelimiter{\itvco}{\lbrack}{\rparen}      
\DeclarePairedDelimiter{\itvoo}{\lparen}{\rparen}      
\DeclarePairedDelimiter{\abs}{\lvert}{\rvert}          
\DeclarePairedDelimiter{\norm}{\lVert}{\rVert}         
\DeclarePairedDelimiter{\ip}{\langle}{\rangle}         
\DeclarePairedDelimiter{\ceil}{\lceil}{\rceil}         
\DeclarePairedDelimiter{\floor}{\lfloor}{\rfloor}      

\providecommand\given{} 
\newcommand\SetSymbol[1][]{%
   \nonscript\:#1\vert\allowbreak\nonscript\:\mathopen{}}
\DeclarePairedDelimiterX\set[1]\lbrace\rbrace{%
   \renewcommand\given{\SetSymbol[\delimsize]}#1}

\numberwithin{equation}{section}
\allowdisplaybreaks[4]

\newtheorem{theorem}{Theorem}[section]

\newtheorem{lemma}[theorem]{Lemma}
\newtheorem{proposition}[theorem]{Proposition}
\newtheorem{corollary}[theorem]{Corollary}
\theoremstyle{definition}
\newtheorem{definition}{Definition}

\theoremstyle{remark}

\newtheorem{remark}{Remark}


 %
 %
 %
 %
 %
 %
 %

\DeclareMathOperator{\exponential}{e}

\DeclareMathOperator{\im}{im}

\newcommand{\R}{\mathbb{R}}
\newcommand{\C}{\mathbb{C}}
\newcommand{\Z}{\mathbb{Z}}
\newcommand{\N}{\mathbb{N}}

\newcommand{\cF}{\mathcal{F}}

\newcommand{\cG}{\mathcal{G}}




\newcommand{\gaborG}[1]{\mathcal{G}(#1,a,b)} 
\newcommand{\sfrac}[1]{F{(#1)}} 
\newcommand{\round}[1]{R{(#1)}} 
\newcommand{\myexp}[1]{\exponential^{#1}}

\makeatletter
\def\maketimestamp{\count255=\time
\divide\count255 by 60\relax
\edef\thetime{\the\count255:}%
\multiply\count255 by-60\relax
\advance\count255 by\time
\edef\thetime{\thetime\ifnum\count255<10 0\fi\the\count255}
\edef\thedate{\number\day-\ifcase\month\or Jan\or Feb\or Mar\or
             Apr\or May\or Jun\or Jul\or Aug\or Sep\or Oct\or
             Nov\or Dec\fi-\number\year}
\def\timstamp{\hbox to\hsize{\tt\hfil\thedate\hfil\thetime\hfil}}}
\makeatother
\maketimestamp

\pagestyle{fancy}
\fancyhf{}
\fancyhf[L]{\small\textsl{Lemvig}}
\fancyhf[R]{\small\textsl{\leftmark}}

\fancyfoot{}
\fancyfoot[C]{\small  \thepage{} of \pageref{LastPage}}

\fancypagestyle{plain}{%
\fancyhf{} 
\fancyfoot[C]{\small \thepage{} of \pageref{LastPage}}}

\makeatletter
\def\blfootnote{\xdef\@thefnmark{}\@footnotetext} 
\if@titlepage
  \renewenvironment{abstract}{%
      \titlepage \null\vfil
      \@beginparpenalty\@lowpenalty
      \begin{center}\bfseries \abstractname \@endparpenalty\@M \end{center}}%
     {\par\vfil\null\endtitlepage}
\else
  \renewenvironment{abstract}{%
      \small
      \list{}{\setlength{\leftmargin}{3em}\setlength{\rightmargin}{3em}}
      \item[\textbf{\abstractname:}]}
      {\endlist}
\fi
\makeatother

\usepackage{hyperref}
\hypersetup{
    colorlinks=true,
    linkcolor={red!50!black},
    citecolor={blue!50!black},
    urlcolor={blue!80!black},
    pdfview={FitH},
    pdfauthor={Jakob Lemvig},
    pdftitle={A new family of hyperbolic slits in the Gabor frame set of B-spline generators},
}

\usepackage[noabbrev,nameinlink,capitalize]{cleveref}


\begin{document}

\title{A new family of hyperbolic slits in the Gabor frame set of B-spline generators}
\author{Jakob Lemvig\footnote{Technical University of Denmark. E-mail: \protect\url{jakle@dtu.dk}}}
\date{\today}

\blfootnote{2010 {\it Mathematics Subject Classification.} Primary 42C15. Secondary: 42A60}
\blfootnote{{\it Key words and phrases.} B-spline, frame set, Gabor system, partition of unity, Zibulski-Zeevi matrix}

\maketitle
\thispagestyle{plain}

\begin{abstract}
  We exhibit a new infinite family of hyperbolic curves in the complement of the frame set of Gabor systems with B-spline generators. The proof technique is a combination of an approach by Gr\"ochenig [\emph{Partitions of unity and new obstructions for {G}abor frames}, arXiv:1507.08432, 2015] and a partly partition of unity argument by Nielsen and the author [\emph{Counterexamples to the {B}-spline conjecture for {G}abor frames}, J. Fourier Anal. Appl., 22(6):1440--1451, 2016]. We relate the new hyperbolic obstructions to the ``right bow tie'' of the so-called Janssen tie [\emph{Zak transforms with few zeros and the tie}, In {\em Advances in {G}abor analysis}, Birkh\"auser, 2003].
\end{abstract}


\section{Introduction}
\label{sec:intro}

The Gabor system generated by $g \in L^2(\R)$ with time-frequency shifts along the lattice $a\Z \times b\Z$, $a,b>0$, is defined as
\[
  \gaborG{g} = \set*{\myexp{2\pi i b m \cdot} g(\cdot - a k) \given k,m \in \Z}.
\]
The system $\gaborG{g}$ is called a \emph{Gabor frame} if there exist constants $A,B > 0$ such that
\[
  A \norm{f}^2 \le  \sum_{k,m \in \Z} \abs{\ip{f,\myexp{2\pi i b m \cdot} g(\cdot - a k)}}^2
  \le B \norm{f}^2 \quad \text{for all } f \in L^2(\R).
\]
One of the fundamental problems in Gabor analysis asks, given $g \in L^2(\R)$, for the determination of the \emph{frame set} $\mathcal{F}(g)$, which consists of the parameter values $(a,b)\in\mathbb{R}_+^2$ for which $\mathcal{G}(g,a,b)$ is a frame.
The modern formulation is due to Gr\"ochenig~\cite{MR3232589}, who emphasized it as an important problem in time-frequency analysis; the question itself dates back to the 1990s, see \cite{MR2031050,MR1066587,MR1955931}.

In this paper, we consider the frame set of the cardinal B-splines $N_n$ of order $n \ge 2$, $n \in \N$. The cardinal B-splines are defined as the $n$-fold convolution of the indicator function of the unit interval $\itvcc{0,1}$, i.e.,
\begin{equation*}
  N_1 = \chi_{\itvcc{0, 1}}, \quad \text{and} \quad
  N_{n+1} = N_{n} \ast N_{1}, \quad \text{for } n \in \N.
\end{equation*}
Due to its many desirable properties such as compact support, smoothness, and partition of unity property, these B-splines are widely used as Gabor system generators. The characterization of $\mathcal{F}(N_n)$ was one of six open problems in Gabor analysis posed by Christensen in \cite{ChristensenNew2014}. Our main result, \Cref{thm:new-hyperbolas-frame-set-Bn}, proves the existence of a new infinite family of hyperbolic obstructions to the frame set $\cF(N_n)$.

The geometric complexity of the frame set for the indicator function $N_1$ is well-documented, famously described by the 'Janssen tie' \cite{MR1955931}, and culminating in the complete characterization of $\cF(N_1)$ by Dai and Sun \cite{MR3545108}. Historically, it was unclear whether the complexity of $\cF(N_1)$ was an artifact of the discontinuity of $N_1$.
However, the results in \cite{MR3572909} showed that the frame set of B-spline windows of all orders must have a very complicated structure, sharing several similarities with $\cF(N_1)$. In the present paper, we strengthen this perspective: we argue that certain number-theoretic constraints on $a$ and $b$ determine frame obstructions, and that (at least parts of) the hyperbolic geometry in the Janssen tie is not unique to $N_1$, but rather a general phenomenon for all cardinal B-splines $N_n$.

Our analysis is based on the Zak transform and the Zibulski-Zeevi representation of Gabor systems \cite{MR1448221}. Hence, let us briefly summarize the main idea here. The Zak transform of a function $f \in L^2(\R)$ is defined as
\begin{equation}\label{eq:zakTransform}
  \left(Z_{\lambda}f\right)(x,\gamma)
  = \sqrt{\lambda}\sum_{k\in\mathbb{Z}} f(\lambda(x-
  k))\myexp{2\pi i k \gamma}, \quad a.e.\ x, \gamma \in \mathbb{R},
\end{equation}
with convergence in  $L^2_\mathrm{loc}(\R)$. We only consider rationally oversampled Gabor systems, i.e., $\mathcal{G}(g,a,b)$ with
\[
  ab \in \mathbb{Q}, \quad ab=\frac{p}{q} \quad \gcd(p,q)=1.
\]
For $g\in L^2(\R)$, we define column vectors $\phi^g_\ell(x,\gamma) \in \C^p$ for $\ell
  \in \set{0,1, \dots, q-1}$ by
\[
  \phi^g_\ell(x,\gamma) = \left(p^{-\frac{1}{2}} (Z_{\frac{1}{b}}g)(x-\ell
  \frac{p}{q},\gamma+\frac{k}{p})\right)_{k=0}^{p-1} \ a.e. \ x,\gamma \in \mathbb{R}.
\]
The $p\times q$ matrix defined by $\Phi^g(x,\gamma)=[\phi^g_\ell(x,\gamma)]_{\ell=0}^{q-1}$ is
the so-called Zibulski-Zeevi matrix from which we get the following characterization of the frame property of rationally oversampled Gabor systems:
\begin{theorem}[Zibulski-Zeevi characterization]
  \label{thm:ZZ_singular_values}
  Let $A,B>0$, and let $g \in L^2(\R)$. Suppose $\mathcal{G}(g,a,b)$
  is rationally oversampled Gabor system. Then the following
  assertions are equivalent:
  \begin{enumerate}[label=(\roman*)]
    \item   $\mathcal{G}(g,a,b)$ is a Gabor frame for $L^2(\R)$ with bounds $A$ and $B$,
    \item $\set{\phi^g_\ell(x,\gamma)}_{\ell=0}^{q-1}$ is a frame for $\C^p$ with
          uniform bounds $A$ and $B$ for a.e. $(x,\gamma) \in
            \itvco{0, 1}^2$.
  \end{enumerate}
\end{theorem}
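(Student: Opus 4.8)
The plan is to transport the problem into the Zak domain, where the Gabor frame operator becomes a field of $p\times p$ Hermitian matrices acting fibrewise on $\C^p$; the frame bounds of $\gaborG{g}$ will then be exactly the uniform spectral bounds of these matrices, i.e.\ frame bounds for the columns $\set{\phi^g_\ell(x,\gamma)}_\ell$. Throughout set $Z:=Z_{1/b}$, which is unitary from $L^2(\R)$ onto $L^2(\itvco{0,1}^2)$, and write $T_c h=h(\cdot-c)$ and $M_d h=\myexp{2\pi i d\cdot}h$. First I would record three one-line consequences of the defining series \eqref{eq:zakTransform}: the quasi-periodicity $(Zh)(x+1,\gamma)=\myexp{2\pi i\gamma}(Zh)(x,\gamma)$ together with $1$-periodicity in $\gamma$; the modulation rule $Z(M_{bm}h)(x,\gamma)=\myexp{2\pi i m x}(Zh)(x,\gamma)$; and, using $a=(p/q)\,b^{-1}$, the translation rule $Z(T_{ak}h)(x,\gamma)=(Zh)(x-kp/q,\gamma)$. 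Applying the same construction to the test function $f$ produces vectors $\phi^f_\ell$; only the column $\phi^f_0$ will enter below.

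Next comes the core computation. With $F=Zf$ and $G=Zg$, unitarity of $Z$ and the modulation rule exhibit $\ip{f,M_{bm}T_{ak}g}$ as the $m$-th Fourier coefficient (in $x$) of $x\mapsto\int_0^1 F(x,\gamma)\overline{G(x-kp/q,\gamma)}\,d\gamma$, so Parseval for $\set{\myexp{2\pi i m x}}_{m\in\Z}$ collapses the sum over $m$. Splitting $k=qj+\ell$ with $\ell\in\set{0,\dots,q-1}$ and absorbing the integer shift $jp$ via quasi-periodicity turns the sum over $j$ into a sum over the Fourier modes $\set{jp}_{j\in\Z}$ of $\gamma\mapsto F(x,\gamma)\overline{G(x-\ell p/q,\gamma)}$. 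The standard $p$-fold periodization identity for such $p$-subsampled Fourier coefficients then reassembles the $\gamma+k/p$ shifts and the factor $p^{-1/2}$ in the definition of the $\phi$-vectors, yielding
\[
  \sum_{k,m\in\Z}\abs{\ip{f,M_{bm}T_{ak}g}}^2
  =\int_{\itvco{0,1}^2}\sum_{\ell=0}^{q-1}\abs{\ip{\phi^f_0(x,\gamma),\phi^g_\ell(x,\gamma)}}^2\,dx\,d\gamma ,
\]
together with the companion identity $\norm{f}^2=\int_{\itvco{0,1}^2}\norm{\phi^f_0(x,\gamma)}^2\,dx\,d\gamma$, which drops out of the same manipulation (or directly from unitarity of $Z$).

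Both integrands are $1/p$-periodic in $\gamma$, since $\gamma\mapsto\gamma+1/p$ acts on every $\phi$-vector by the unitary cyclic shift of its $p$ coordinates; hence both integrals reduce to the domain $E:=\itvco{0,1}\times\itvco{0,1/p}$. On $E$ the map $f\mapsto\phi^f_0$ is \emph{onto} $L^2(E;\C^p)$: restricting $\gamma$ to $\itvco{0,1/p}$ makes $\set{\gamma+k/p}_{k=0}^{p-1}$ a bijection onto $\itvco{0,1}$, so $F\mapsto(p^{-1/2}F(x,\gamma+k/p))_{k}$ is unitary onto $L^2(E;\C^p)$, and $Z$ is onto $L^2(\itvco{0,1}^2)$. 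The implication (ii)$\Rightarrow$(i) is now immediate: insert the pointwise frame inequality for $\set{\phi^g_\ell(x,\gamma)}_\ell$ with test vector $v=\phi^f_0(x,\gamma)$ and integrate over $E$, using the two identities. For (i)$\Rightarrow$(ii), rewrite the frame inequality for $\gaborG{g}$ through the same identities, reduce to $E$, and invoke surjectivity to replace $\phi^f_0$ by an arbitrary $w\in L^2(E;\C^p)$; this says exactly that the Hermitian multiplication operator $M_S$ with symbol $S(x,\gamma)=\sum_{\ell=0}^{q-1}\phi^g_\ell(x,\gamma)\,\phi^g_\ell(x,\gamma)^{*}$ satisfies $A\,\mathrm{Id}\le M_S\le B\,\mathrm{Id}$ on $L^2(E;\C^p)$.

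The one genuinely non-formal step is the last: passing from this integrated operator bound to the pointwise bound $A\,\mathrm{Id}\le S(x,\gamma)\le B\,\mathrm{Id}$ for a.e.\ $(x,\gamma)$, which is (ii). I expect this to be the main obstacle and would settle it by contradiction together with a measurable selection: if $\lambda_{\min}(S)<A$ on a set $\Omega\subseteq E$ of positive measure, then, the eigenprojections of the Hermitian field $S$ being measurable in $(x,\gamma)$, one can choose a unit eigenvector field $v(x,\gamma)$ for the deficient eigenvalue and test the lower bound with $w=v\,\charfct{\Omega}$ to reach a contradiction; the upper bound is symmetric. This is the familiar fact that an $L^\infty$ Hermitian-matrix multiplication operator is $\ge A\,\mathrm{Id}$ iff its least eigenvalue is $\ge A$ almost everywhere. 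Finally, $1/p$-periodicity transports the a.e.\ conclusion from $E$ back to all of $\itvco{0,1}^2$, completing the equivalence with identical bounds $A,B$.
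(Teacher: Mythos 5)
The paper does not prove this theorem at all: it is quoted as known background from Zibulski--Zeevi \cite{MR1448221} (see also \cite{MR1843717}), so there is no ``paper proof'' to compare against. Judged on its own merits, your proof is correct and is essentially the classical fiberization argument behind the Zibulski--Zeevi representation: the commutation rules for $Z_{1/b}$ with $M_{bm}$ and $T_{ak}$, Parseval in $m$ over the $x$-Fourier modes, the splitting $k=qj+\ell$ with quasi-periodicity absorbing $jp$, the $p$-subsampled Parseval identity in $\gamma$ producing the vectors $\phi^f_0,\phi^g_\ell$, and the reduction of the frame inequality to two-sided bounds for the Hermitian multiplication operator with symbol $S=\sum_{\ell}\phi^g_\ell(\phi^g_\ell)^*$ on $L^2(E;\C^p)$. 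You also correctly identify the only non-formal step, the passage from the integrated operator bounds to the a.e.\ pointwise bounds $A\,\mathrm{Id}\le S(x,\gamma)\le B\,\mathrm{Id}$. Your measurable-eigenvector-selection argument works (the selection exists by standard measurable selection theorems, since $\lambda_{\min}$ is a measurable function of the matrix entries), but you could avoid that machinery entirely: testing the integrated bound with $w=\charfct{\Omega'}u$ for a fixed vector $u$ in a countable dense subset of $\C^p$ and arbitrary measurable $\Omega'\subseteq E$ of positive measure gives $\ip{S(x,\gamma)u,u}\ge A\norm{u}^2$ a.e.\ for each such $u$, and continuity in $u$ then upgrades this to all $u\in\C^p$ outside a single null set; the upper bound is symmetric. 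Two minor bookkeeping points worth noting if you write this up fully: the map $F\mapsto\bigl(p^{-1/2}F(x,\gamma+k/p)\bigr)_{k}$ restricted to $E$ is $p^{-1/2}$ times a unitary rather than a unitary (surjectivity, which is all you use, is of course unaffected), and the Parseval identities should be read in the extended sense allowing both sides to be $+\infty$, so that the computation is valid for every $f\in L^2(\R)$ and not just for $f$ making the left-hand side finite.
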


For windows in the Feichtinger algebra $M^1(\R)$, the Zibulski-Zeevi matrix has continuous entries, and thus, to show that $\gaborG{g}$ is not a frame, it suffices to find a single point $(x,\gamma) \in \itvco{0, 1}^2$ such that the set $\set{\phi^g_\ell(x,\gamma)}_{\ell=0}^{q-1}$ does not span $\C^p$.

For $x\in \R$, we let $\round{x}$ denote the round function to the nearest integer, i.e., $R(x)=\floor{x+\frac12}$, and we let $\sfrac{x}=x-\round{x} \in
  \itvoc{-\frac12, \frac12}$ denote the (signed) fractional part of $x$. The main result of \cite{MR3572909} states that $\gaborG{N_n}$ is not a frame for
\begin{equation}
  \label{eq:old_hyperbolic_obstructions}
  ab=\frac{p}{q}<1 \quad \text{for } \abs{\sfrac{b}} \le \frac{1}{nq} \;\text{ and }\; b > \frac{3}{2},
\end{equation}
where $\gcd(p,q)=1$. Based on a large number of computer-assisted symbolic calculations by Kamilla~H.~Nielsen, we realized that there are further hyperbolic obstructions for the frame property of the Gabor system $\mathcal{G}(N_n,a,b)$ ``far'' away from integer values of $b=2,3,4, \dots$. Indeed, we conjectured in \cite{MR3572909} the following: $\mathcal{G}(N_2,a_0,b_0)$ is not a frame for
\begin{equation}
  \label{eq:con_newNonFrame}
  a_0= \frac{1}{2m+1}, \ b_0=\frac{2k+1}{2}, \ k,m\in\mathbb{N}, \ k>m, \ a_0 b_0 < 1,
\end{equation}
and, furthermore, $\mathcal{G}(N_2,a,b)$ is not a frame along the hyperbolas
\begin{equation}
  \label{eq:con_hyperbelStykke}
  \ ab=\frac{2k+1}{2\left(2m+1\right)}, \quad \text{for } \abs{b-b_0} \le \frac{k-m}{2(2m+1)},
\end{equation}
for every $a_0$ and $b_0$ defined by (\ref{eq:con_newNonFrame}). We provided a proof of the case $m=1$ and $k=2$ in \cite{MR3572909}. Gr\"ochenig proved the first part \eqref{eq:con_newNonFrame} of this conjecture in the note \cite{GrochenigPartitions2015}. The full conjecture remained open until Ghosh and Selvan \cite{MR4917072} recently were able to verify it.

Important for the development of the present paper is that Gr\"ochenig~\cite{GrochenigPartitions2015} realized that the pattern \eqref{eq:con_newNonFrame} found in \cite{MR3572909} was part of a much larger family of obstructions that holds for any Gabor system generator in the Feichtinger algebra $M^1(\R)$ satisfying the partition of unity property:

\begin{theorem}[Gr\"ochenig \cite{GrochenigPartitions2015}]
  \label{thm:point-obstructions-grochenig}
  Let $\mu,\nu,r \in \N$ with $r \ge 2$. Let $p=r\nu+j$ and $q=r\mu$, where  $j=1,\dots,r-1$. Define the set $P$ of points $(a_0,b_0) \in \R_+^2$ where $a_0=\frac{r}{q}=\frac{1}{\mu}$ and $b_0=\frac{p}{r} = \nu+\frac{j}{r}$ subject to the constraints that $p$ and $q$ are relatively prime with $q-\mu +1< p<q$.
  If $(a_0,b_0) \in P$ and $g \in M^1(\R)$ satisfies the partition of unity property $\sum_{k \in \Z} g(\cdot + k) = 1$, then $\cG (g,a_0,b_0)$ is \emph{not} a frame for $L^2(\R)$.
\end{theorem}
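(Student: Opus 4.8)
The plan is to apply the Zibulski--Zeevi characterization (\Cref{thm:ZZ_singular_values}) in the rank-deficient direction. Since $g\in M^1(\R)$ makes the Zibulski--Zeevi matrix continuous, it suffices, as noted above, to exhibit a single point $(x,\gamma)\in\itvco{0,1}^2$ at which the $p\times q$ matrix $\Phi^g(x,\gamma)$ has rank strictly less than $p$, so that the columns $\set{\phi^g_\ell(x,\gamma)}_{\ell=0}^{q-1}$ fail to span $\C^p$. I would work at $(x,\gamma)=(0,0)$ and first remove the dependence on the frequency index $k$ by a discrete Fourier transform.

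The engine of the argument is that $b_0=p/r$ gives $b_0^{-1}p=r$: left-multiplying $\Phi^g(0,0)$ by the invertible $p\times p$ matrix $\bigl(\myexp{2\pi i ks/p}\bigr)_{s,k=0}^{p-1}$ turns the geometric $k$-sum in the Zak transform $Z_{1/b_0}g$ into samples of the $r$-periodization $\psi:=\sum_{m\in\Z}g(\cdot-rm)$ of the window. A direct computation then shows that, up to a nonzero scalar, $\Phi^g(0,0)$ has the same rank as the $p\times q$ matrix $M$ with entries $M_{s,\ell}=\psi\bigl(\tfrac{sr}{p}-\tfrac{\ell}{\mu}\bigr)$, where I have used $a_0=1/\mu$, i.e. $r/q=1/\mu$.

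Now the partition of unity enters. I would partition the $q=r\mu$ column indices into the $\mu$ families $\set{\ell+t\mu : t=0,\dots,r-1}$, $\ell=0,\dots,\mu-1$. Summing the columns in one family, the $s$-th entry is $\sum_{t=0}^{r-1}\psi\bigl(\tfrac{sr}{p}-\tfrac{\ell}{\mu}-t\bigr)=\sum_{n\in\Z}g\bigl(\tfrac{sr}{p}-\tfrac{\ell}{\mu}-n\bigr)=1$, because $\set{t+rm : 0\le t<r,\ m\in\Z}$ runs through $\Z$ and $\sum_{k\in\Z}g(\cdot+k)=1$; hence every family sums to the all-ones vector $\mathbf{1}\in\C^p$. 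Consequently the $\mu-1$ differences of family-sums are $\mu-1$ linearly independent relations among the columns of $M$, whence $\rank M\le q-(\mu-1)=q-\mu+1$. The hypothesis $q-\mu+1<p$ then yields $\rank\Phi^g(0,0)=\rank M<p$, so $\set{\phi^g_\ell(0,0)}_{\ell=0}^{q-1}$ does not span $\C^p$ and $\cG(g,a_0,b_0)$ is not a frame.

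The one place that needs care is the Fourier reduction: one must verify that the character-weighted $k$-sum collapses exactly to (a shift of) $\psi$ --- this is precisely where the arithmetic $b_0^{-1}p=r$ and $r/q=1/\mu$, i.e. the prescribed values $a_0=1/\mu$ and $b_0=\nu+j/r$, is used --- and that $\psi$ is well defined with a legitimate point evaluation, both guaranteed by $g\in M^1(\R)$. Everything after that is a clean rank count, and it is pleasant that the bound $\rank M\le q-\mu+1$ meets the hypothesis $q-\mu+1<p$ exactly; the coprimality $\gcd(p,q)=1$ is what makes $p/q$ the reduced quotient underlying the Zibulski--Zeevi setup.
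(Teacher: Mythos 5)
Your proposal is correct and is essentially the paper's own technique (the one adopted from Gr\"ochenig and used to prove \Cref{thm:new-hyperbolas-frame-set-Bn} via \Cref{lem:ZZ-structure}), specialized to the exact point $(a_0,b_0)$ where the full partition of unity is available: rank deficiency of the Zibulski--Zeevi matrix at a single point, obtained by summing columns over the residue classes of $\ell$ modulo $\mu$, collapsing these family sums by the partition of unity, and then counting kernel dimension via rank--nullity together with \Cref{thm:ZZ_singular_values} and continuity of the matrix entries for $g \in M^1(\R)$. Your preliminary DFT conjugation, which turns the entries into samples of the $r$-periodization so that each family sums to the all-ones vector, is the same computation the paper carries out inline, where the character sum $\sum_{w=0}^{p-1}\e^{2\pi i w k/p}$ collapses the family sums to multiples of $e_0$; the two arguments differ only by this change of basis.
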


If one takes $\mu = 2m+1$ odd and $r=2$ in \Cref{thm:point-obstructions-grochenig}, then one recovers the obstructions in \eqref{eq:con_newNonFrame} since $N_2$ satisfies the partition of unity property. Gr\"ochenig noted that $P$ contains countably many point of the form $(1/\mu,\mu-1+j/r)$, $r \in \N$, with accumulation point $(1/\mu, \mu)$ for each $\mu \ge 3$. We refer to \Cref{fig:point-set-P} for a visualization of the set $P$.
\begin{figure}
  \centering
  \includegraphics[width=0.6\textwidth]{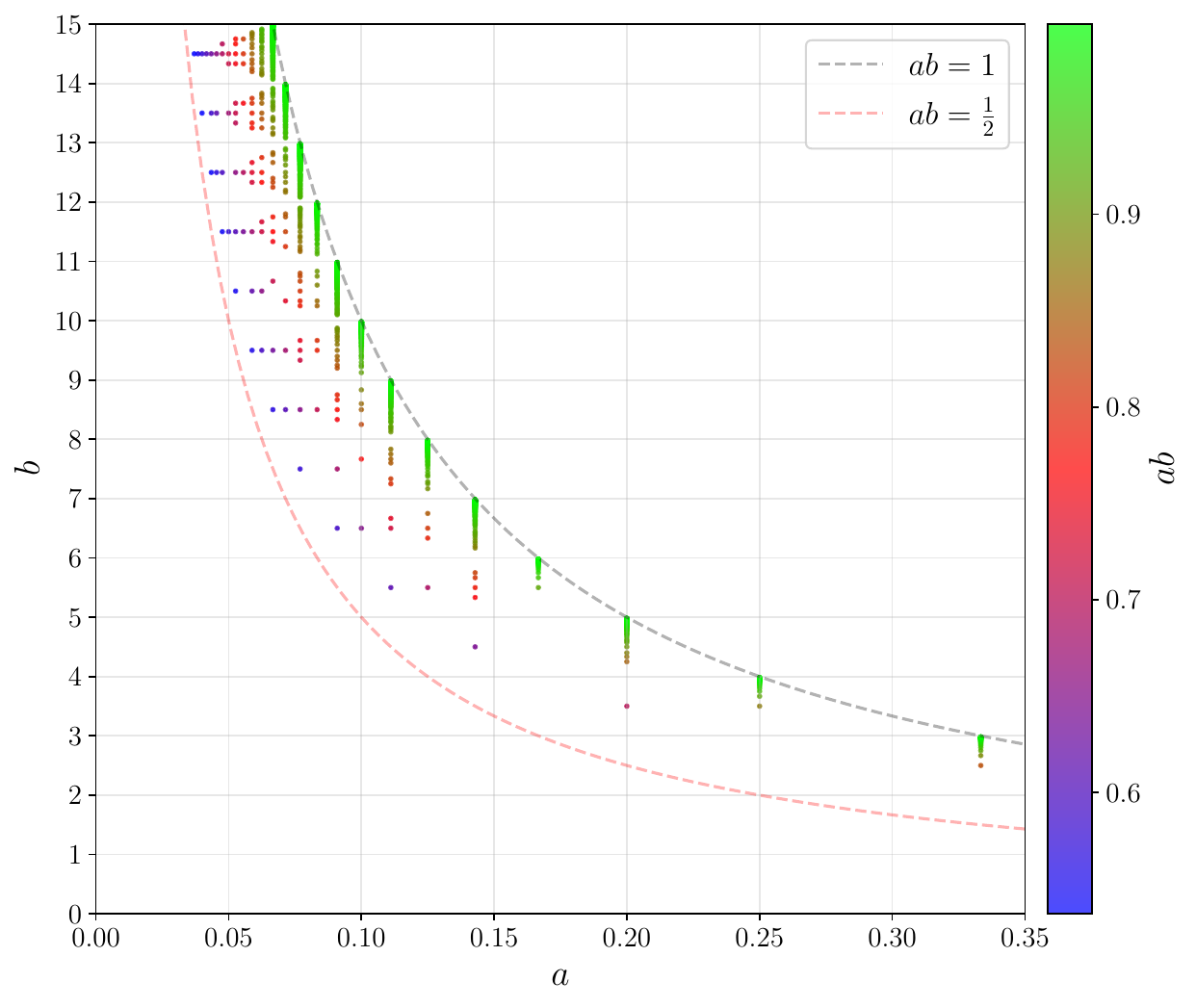}
  \caption{The point obstruction set $P$ defined in \Cref{thm:point-obstructions-grochenig} plotted for $b \le 15$. In each vertical band $a_0 = 1/\mu$, the points become denser and denser as $r$ increases with accumulation point $(1/\mu, \mu)$, $\mu \ge 3$. The colorbar indicates the value of $ab \in \itvoo{1/2,1}$.
  }
  \label{fig:point-set-P}
\end{figure}

The aim of this work is to show that for B-splines the point obstructions defined by $P$ extends to hyperbolic curve obstructions to $\cF(N_n)$ (see \Cref{fig:hyperbola-set}):
\begin{theorem}
  \label{TEST}
  \label{thm:new-hyperbolas-frame-set-Bn}
  Let $n \in\N$. For $(a_0,b_0) \in P$ we set $a_0 b_0=p/q$ and $q=r\mu$ with $p,q,r,\mu$ as in \Cref{thm:point-obstructions-grochenig}. Define the set
  \begin{align}
    H             & = \bigcup_{(a_0,b_0) \in P} H_{(a_0,b_0)}, \label{eq:hyperbolic-obstruction-set}                                      \\
    \intertext{where}
    H_{(a_0,b_0)} & = \set*{(a,b) \in \R_+^2 \given ab=\frac{p}{q} \text{ and } \;\abs{b - b_0} \le \frac{1}{nq}\bigl(\mu-(q-p+1) \bigr)}
    \label{eq:hyperbolic-obstruction-b-interval}
  \end{align}
  If $(a,b) \in H$, then $\cG (N_n,a,b)$ is \emph{not} a frame for $L^2(\R)$. In other words, the complement of the frame set $\R_+^2 \setminus \cF(N_n)$ contains the set $H$.
\end{theorem}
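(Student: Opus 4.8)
The plan is to combine the Zibulski--Zeevi reduction of \Cref{thm:ZZ_singular_values} with Gr\"ochenig's point obstruction \Cref{thm:point-obstructions-grochenig} as an anchor, and then to propagate the resulting degeneracy along the hyperbola by a \emph{partly} partition of unity argument tailored to the compactly supported, piecewise-polynomial structure of $N_n$. First I would record that for $n\ge 2$ (the case at hand) one has $N_n\in M^1(\R)$, so the entries of $\Phi^{N_n}(x,\gamma)$ are continuous in $(x,\gamma)$; moreover along each fiber $H_{(a_0,b_0)}$ the product $ab=p/q$ is a fixed rational in lowest terms, so $p$ and $q$---and hence the size $p\times q$ of the Zibulski--Zeevi matrix---stay fixed while only $\lambda=1/b$ varies. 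By \Cref{thm:ZZ_singular_values} together with the continuity remark, it then suffices, for each $(a,b)\in H$, to exhibit a \emph{single} point $(x,\gamma)\in\itvco{0,1}^2$ at which the columns $\set*{\phi^{N_n}_\ell(x,\gamma)}_{\ell=0}^{q-1}$ fail to span $\C^p$, equivalently a nonzero $v\in\C^p$ with $v^\ast\Phi^{N_n}(x,\gamma)=0$, i.e.\ a nontrivial linear dependence among the $p$ rows.

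Next I would anchor the argument at the center $(a_0,b_0)\in P$. There \Cref{thm:point-obstructions-grochenig} already gives that $\gaborG{N_n}$ is not a frame, and I would extract from its proof the explicit degenerate point $(x_\ast,\gamma_\ast)$ and the witnessing direction $v\in\C^p$. The mechanism I would rely on is that the partition of unity $\sum_{k\in\Z} N_n(\cdot+k)=1$ produces a constant row---more precisely a linear relation among the rows---from a suitable combination of the sampled Zak values $(Z_{1/b_0}N_n)(x_\ast-\ell p/q,\gamma_\ast+k/p)$, which the arithmetic constraint $q-\mu+1<p<q$ then forces into a rank drop. This pins down both the location and the kernel direction that I will deform.

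The core step is to propagate this degeneracy along the hyperbola. Writing $b=b_0+\delta$ with $ab=p/q$ fixed, I would keep the test point at $(x_\ast,\gamma_\ast)$ (allowing a $\delta$-dependent adjustment) and re-examine $v^\ast\Phi^{N_n}(x,\gamma)$. Since $N_n$ is supported on $\itvcc{0,n}$ and equals a degree-$(n-1)$ polynomial on each unit subinterval, every entry of $\Phi^{N_n}$ is a finite sum of pieces $N_n(\tfrac1b(\,\cdot\,))$, and the combination that vanishes at $\delta=0$ persists as a \emph{partial} partition-of-unity sum: only shifts whose arguments land in $\itvcc{0,n}$ contribute, and these continue to telescope to a constant as long as no active sample point crosses an endpoint of $\itvcc{0,n}$ or an interior knot of $N_n$. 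This is exactly where the B-spline structure, rather than mere partition of unity, enters, and it is what turns Gr\"ochenig's isolated point into a whole arc.

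I expect the main obstacle to be the sharp quantitative bookkeeping in this last step: showing that the cancellation survives not just near $\delta=0$ but \emph{exactly} up to the stated threshold, and not beyond. Concretely, one must verify that throughout the interval every $p\times p$ minor of $\Phi^{N_n}(x,\gamma)$ still vanishes (so the rank genuinely stays below $p$), track on which polynomial segment of $N_n$ each surviving term lies as $\delta$ varies, and locate the first breakdown as the moment a knot of $N_n(\tfrac1b(\,\cdot\,))$---whose position depends affinely on $\delta$ through $\lambda=1/b$---reaches an active sample. The count that should emerge is that $\mu-(q-p+1)$ sample positions separate the critical window from the nearest such knot, each position corresponding to an increment of $\tfrac{1}{nq}$ in $b$ (the $\tfrac1n$ reflecting the support length $n$, the $\tfrac1q$ the lattice spacing $p/q$), so the configuration, and with it the rank drop, is preserved exactly while $\abs{b-b_0}\le\tfrac{1}{nq}\bigl(\mu-(q-p+1)\bigr)$---the interval of \eqref{eq:hyperbolic-obstruction-b-interval}. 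Making this count rigorous, i.e.\ marrying Gr\"ochenig's lattice arithmetic to the Nielsen--Lemvig piecewise-polynomial estimate, is the crux; once it is in place, \Cref{thm:ZZ_singular_values} upgrades the pointwise rank drop at every $(a,b)\in H$ to failure of the frame property, yielding $H\subseteq\R_+^2\setminus\cF(N_n)$.
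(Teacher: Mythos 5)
Your high-level skeleton does match the paper's: reduce via \Cref{thm:ZZ_singular_values} to a pointwise rank drop of $\Phi^{N_n}$, anchor at a point of $P$, and propagate along the hyperbola using a \emph{partly} partition of unity special to B-splines (the paper's \Cref{lem:partly-pou}). However, the mechanism you propose to propagate is misidentified, and this is a genuine gap. You plan to extract from \Cref{thm:point-obstructions-grochenig} a single left-null vector $v\in\C^p$ with $v^{\ast}\Phi^{N_n}(x_{\ast},\gamma_{\ast})=0$ and deform that fixed direction in $b$. But neither Gr\"ochenig's argument nor the partition-of-unity structure produces a row relation you could write down: what the (partly) partition of unity yields is \emph{column} relations. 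Concretely (the paper's \Cref{lem:ZZ-structure}), for each column offset $s$ the sum of every $\mu$-th column, $\sum_{\ell=0}^{r-1}\phi^{N_n}_{\ell\mu+s}(x_0,0)$, equals a multiple $K_s e_0$ of the first coordinate vector. A single such relation carries no rank information at all (it only says $e_0\in\im\Phi^{N_n}(x_0,0)$); the rank drop comes from having these relations for the $q-p+2$ offsets $s=0,1,\dots,q-p+1$ \emph{simultaneously at one point} $x_0$, so that the $q-p+1$ differences $v_s-v_0$ of the normalized coefficient vectors are linearly independent elements of $\ker\Phi^{N_n}(x_0,0)$, whence $\rank\Phi^{N_n}(x_0,0)\le q-(q-p+1)=p-1$ by rank-nullity. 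This multiplicity $q-p+1$ is exactly what forces the rank below $p$; your proposal never explains where it enters, nor why a fixed $v$ should remain in the left kernel while the entries and the constants $K_s$ all vary with $b$. (Your remark that one must also check that ``every $p\times p$ minor vanishes'' reflects the same confusion: once a left-null vector exists that is automatic; the difficulty is producing the degeneracy, not certifying it.)

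The second gap is that you explicitly defer the quantitative step (``making this count rigorous \dots is the crux''), so the stated threshold is never proved, and the mechanism you sketch for it --- a knot of the dilated spline crossing an active sample --- is not the one that works. In the paper the threshold comes from \Cref{lem:partly-pou}: $\sum_{k\in\Z}N_n((x+k)/c)$ is constant on intervals of length $1-n\abs{\sfrac{c}}$ per period, so the $q-p+2$ column-group relations hold at a common $x_0$ precisely when the sample positions of the columns $s=0,\dots,q-p+1$, which occupy an interval of length $(q-p+1)/q$ up to quasi-periodicity, fit inside a constancy window of length $\tfrac1r(1-n\sfrac{rb})$; since $\round{rb}=p$ gives $\sfrac{rb}=r(b-b_0)$, this containment is exactly the inequality $\abs{b-b_0}\le\tfrac{1}{nq}\bigl(\mu-(q-p+1)\bigr)$. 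That window-containment inequality is the entire content of the threshold and is the part you would still have to supply. A minor further point: you restrict to $n\ge2$ as ``the case at hand,'' but the theorem includes $n=1$, where $Z_\lambda N_1$ is not continuous and the pointwise criterion needs the separate piecewise-continuity argument given in the paper's remark.
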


\begin{figure}
  \centering
  \includegraphics[width=0.6\textwidth]{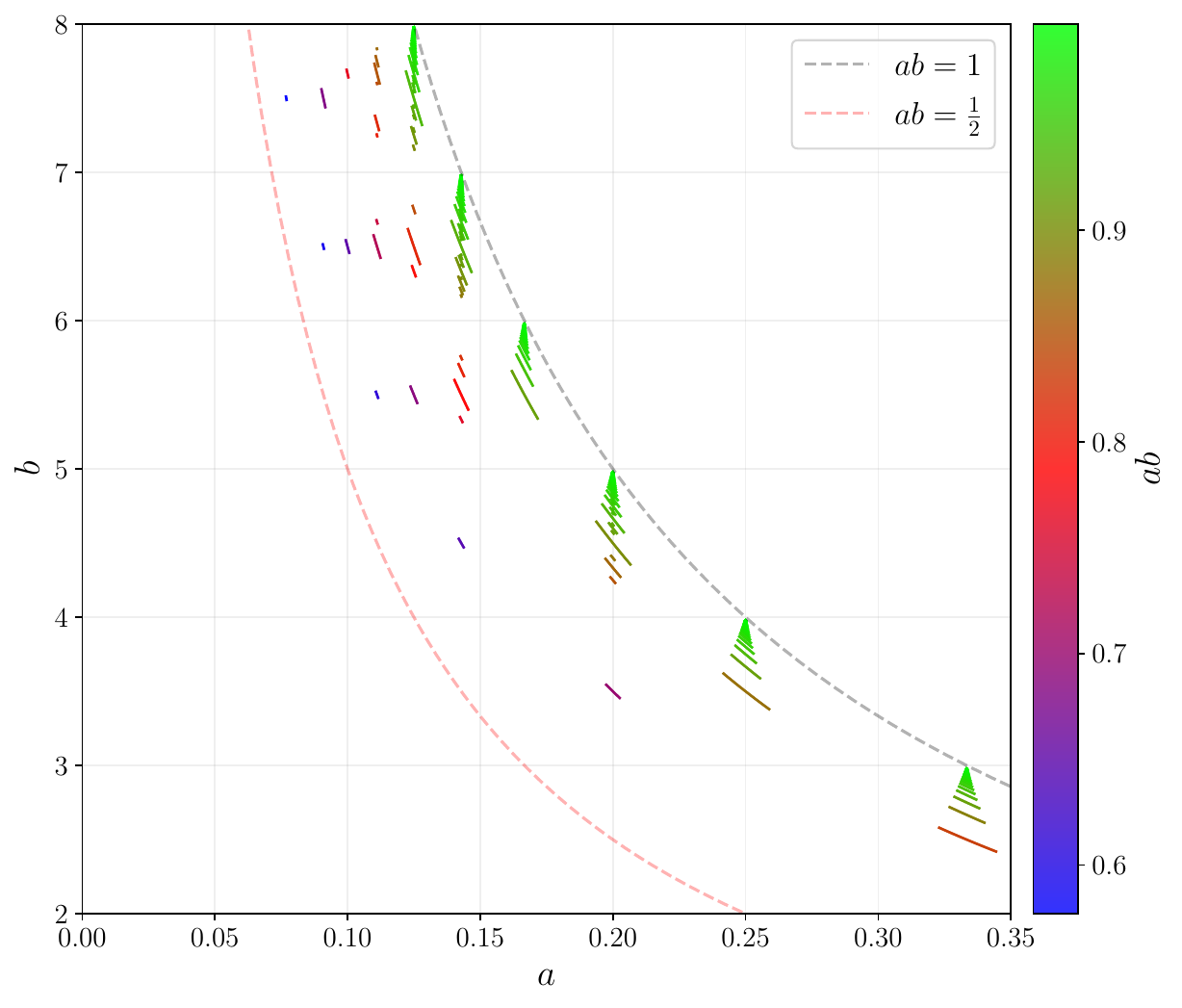}
  \caption{The new hyperbolic obstruction set $H$ for $N_n$, $n=2$, from \Cref{thm:new-hyperbolas-frame-set-Bn}. The hyperbolic segments, defined by \eqref{eq:hyperbolic-obstruction-b-interval}, are colored by their $ab$ values.}
  \label{fig:hyperbola-set}
\end{figure}

In \cite{MR3572909} Nielsen and the author used a zero row failure of the Zibulski-Zeevi matrix to prove frame obstructions, that is, we found a point $(x,\gamma)$ where an entire row of the Zibulski-Zeevi matrix $\Phi^{N_n}$ vanished. Gr\"ochenig's proof in \cite{GrochenigPartitions2015} uses a more powerful idea of explicitly exhibiting $q-p+1$ linearly independent vectors in the kernel of the Zibulski-Zeevi matrix, and this is the proof technique we adopt here.

The proof of \Cref{thm:new-hyperbolas-frame-set-Bn} will be given in \Cref{sec:proof-main-theorem}. First, in the next section, \Cref{sec:point-set-properties}, we will study geometric properties of the sets $P$ and $H$, showcasing the complex behavior of these sets and their connection to the Janssen tie.

For background on Gabor analysis and B-splines we refer to the textbooks \cite{MR3495345,MR1843717}. For results on the frame set of B-splines we refer to
\cite{MR1896414,MR2803840,MR3545108,MR2407006,MR1955931,MR3398948,MR4484792,MR4793698,MR3218799}.

\section{Properties of the obstruction set}
\label{sec:point-set-properties}

It is possible to rewrite the point obstruction set $P$ defined in \Cref{thm:point-obstructions-grochenig} in a simpler and more compact form using a unified parameterization where both $p$ and $q$ represent shifts of the term $r\mu$.

\begin{definition}
  \label{def:point-set-only-mu}
  Let $\mu,k,r \in \N$ be integers satisfying
  \begin{equation}
    \label{eq:conditions-on-mu-r-k}
    \mu \ge 3, \ r \ge 2, \ k \in \set{1,2,\dots,\mu-2}, \ \gcd(k,\mu)=1 \quad \text{and} \quad \gcd(k,r)=1.
  \end{equation}
  Define the set by
  \begin{equation}
    \label{eq:point-set-only-mu}
    P' = \set*{\left(\frac{1}{\mu}, \mu - \frac{k}{r}\right) \in \R_+^2 \given \mu,r,k \text{ satisfying } \eqref{eq:conditions-on-mu-r-k}}.
  \end{equation}
\end{definition}
We associate each point $(a_0,b_0)$ in \eqref{eq:point-set-only-mu} with
\[
  a_0 = \frac{1}{\mu}, \quad b_0 = \mu - \frac{k}{r},
\]
and
\[
  p=r\mu-k, \quad q=r\mu \quad \text{such that} \quad a_0 b_0 = \frac{p}{q}.
\]
Let us argue that the point $(a_0,b_0)$ in \eqref{eq:point-set-only-mu} indeed correspond to the obstructions found by Gr\"ochenig~\cite{GrochenigPartitions2015}:

\begin{lemma}
  The sets $P$ defined in \Cref{thm:point-obstructions-grochenig} and $P'$ defined by \eqref{eq:point-set-only-mu} are identical, i.e., $P=P'$.
\end{lemma}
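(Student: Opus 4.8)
The plan is to prove the two inclusions at once by exhibiting the explicit substitution $k = q - p = r\mu - p$ (equivalently $p = r\mu - k$) and checking that it sets up a bijection between the admissible parameters defining $P$ and those defining $P'$. First I would record that both descriptions share $a_0 = 1/\mu$ and $q = r\mu$ with $r \ge 2$, so only the second coordinate and the arithmetic constraints need to be matched. In the notation of \Cref{thm:point-obstructions-grochenig} one has $b_0 = p/r$, and substituting $p = r\mu - k$ gives $b_0 = \mu - k/r$ directly; conversely $k = r\mu - p$ recovers the $P'$-parameter from the $P$-parameter. Thus the two expressions for the ordinate agree under this substitution, and it remains only to verify that the constraints transform into one another.

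Next I would translate the range constraint. With $q = r\mu$ the inequality $q - \mu + 1 < p < q$ reads $r\mu - \mu + 1 < r\mu - k < r\mu$; the right inequality is equivalent to $k \ge 1$ and the left to $k \le \mu - 2$, so $q - \mu + 1 < p < q$ holds if and only if $k \in \{1, \dots, \mu - 2\}$. In particular this set of admissible $p$ (equivalently $k$) is nonempty precisely when $\mu \ge 3$, which shows that the hypothesis $\mu \ge 3$ imposed in \Cref{def:point-set-only-mu} is already forced by the constraints defining $P$ and therefore costs nothing.

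For the coprimality conditions I would use $\gcd(r\mu - k, r\mu) = \gcd(k, r\mu)$, so that $\gcd(p,q) = 1$ is equivalent to $\gcd(k, r\mu) = 1$, which in turn holds if and only if $\gcd(k, r) = 1$ and $\gcd(k, \mu) = 1$. This is exactly the pair of conditions in \eqref{eq:conditions-on-mu-r-k}. The one place requiring a small extra argument is the passage from a $P'$-parameter back to the decomposition $p = r\nu + j$ with $j \in \{1, \dots, r-1\}$ needed for the $P$-description: here I would note that $\gcd(k,r) = 1$ with $r \ge 2$ forces $r \nmid p = r\mu - k$, so that $j = p \bmod r$ indeed lies in $\{1, \dots, r-1\}$ and $\nu = (p-j)/r$ is an integer; moreover $b_0 = \mu - k/r \ge \mu - (\mu-2)/2 > 1$ guarantees $\nu \ge 1$, so $\nu \in \N$.

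Putting these equivalences together yields both inclusions: every point of $P$ gives, via $k = r\mu - p$, a triple $(\mu, r, k)$ satisfying \eqref{eq:conditions-on-mu-r-k} and hence a point of $P'$, while every point of $P'$ gives, via $p = r\mu - k$ together with the decomposition above, parameters satisfying the hypotheses of \Cref{thm:point-obstructions-grochenig} and hence a point of $P$. I expect no genuine obstacle in this argument; it is elementary number theory, and the only subtlety worth stating explicitly is that the restriction $\mu \ge 3$ and the condition $j \neq 0$ are automatic rather than additional, so that the reparameterization neither adds nor removes points.
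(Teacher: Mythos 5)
Your proposal is correct and follows essentially the same route as the paper: the substitution $k = q-p$ (equivalently $p = r\mu - k$), the translation of $q-\mu+1 < p < q$ into $1 \le k \le \mu-2$, the identity $\gcd(r\mu-k, r\mu) = \gcd(k,r\mu)$ splitting into the two coprimality conditions, and the residue argument $r \nmid p$ giving the decomposition $p = r\nu + j$ with $j \in \{1,\dots,r-1\}$. Your two small additions — noting that $\mu \ge 3$ is forced rather than assumed, and deriving $\nu \ge 1$ from $b_0 > 1$ (the paper's terser justification via $p > \mu+1 > 0$ is slightly incomplete as stated) — are minor refinements of the same argument, not a different approach.
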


\begin{proof}
  We first note that $a_0$ is defined identically in both set constructions.

  $P \subset P'$: Assume we have $(p,q)$ as in \Cref{thm:point-obstructions-grochenig}. Set $k = q - p$. From $p<q$ we get $k>0$, hence $k \ge 1$. From $q-\mu+1 < p = q-k$ we get $-\mu+1 < -k$, so $k < \mu-1$, hence $k \le \mu-2$.

  We are given $\gcd(p,q)=1$. Substituting $p=q-k$ gives $\gcd(q-k,q)=1 \iff \gcd(-k,q)=1 \iff \gcd(k,q)=1$. With $q = r\mu$, the relation $\gcd(k, r\mu)=1$ implies $\gcd(k,r)=1$ and $\gcd(k,\mu)=1$.

  $P' \subset P$: Let $p = r\mu - k$ and $q = r\mu$ with $1 \le k \le \mu - 2$ and $\gcd(k,r)=\gcd(k,\mu)=1$. Since $k \ge 1$, we have $p = q - k < q$. Since $k \le \mu - 2$, we get $p = q - k \ge q - (\mu - 2) = q - \mu + 2 > q - \mu + 1$. Thus $q - \mu + 1 < p < q$.
  Relative primality of $p$ and $q$ follows from $\gcd(p,q) = \gcd(r\mu - k, r\mu) = \gcd(k, r\mu) = 1$ because $\gcd(k,r)=\gcd(k,\mu)=1$.

  It remains to show the existence of $\nu,j$ such that $p = r\nu + j$ with $j \in \{1,\dots,r-1\}$. Since $p \not\equiv 0 \pmod r$ (as $\gcd(k,r)=1$), we can write $p = r\nu + j$ with $j = p \bmod r \in \{1,\dots,r-1\}$. Moreover, $p > q - \mu + 1 = r\mu - \mu + 1 \ge 2\mu - \mu + 1 = \mu + 1 > 0$, so $\nu \ge 1$.
\end{proof}

From now on we will denote the point obstruction set simply by $P$, and we will use the parameterization in \Cref{def:point-set-only-mu} in all subsequent proofs and discussions.

As noted in \cite{GrochenigPartitions2015}, the set $P$ accumulates at  $(1/\mu, \mu)$, $\mu \ge 3$, from below. There are no other accumulation points, and all points in $P$ are isolated points. Let us examine some further properties of the point set $P$.
Take $(a_0,b_0) \in P$. Then obviously $a_0 b_0 = \frac{p}{q} < 1$ since $p<q$. Moreover, we have
\[
  a_0 b_0 = \frac{p}{q} = \frac{1}{\mu}\left(\mu - \frac{k}{r}\right) = 1 - \frac{k}{r\mu},
\]
hence also $1/2 < a_0 b_0$ since $1 - \frac{k}{r\mu} \ge 1 - \frac{\mu-2}{2\mu} = \frac{1}{2} + \frac{1}{\mu} > \frac{1}{2}$ for $\mu \ge 3$ and $r \ge 2$.
Thus, all points in $P$ and $H$ lie in the region in  $\R^2_+$ defined by the inequalities $1/2 < a b < 1$.

Obviously, not all rational numbers in the interval $(1/2,1)$ are represented as $a_0 b_0$ as $q$ has to be a composite number of the form $q = r\mu$ with $r \ge 2$ and $\mu \ge 3$, e.g., $q$ cannot be a prime number. However, the allowed inverse densities $a_0 b_0$ are dense since for any $d \in (1/2,1)$ we can take $r=2$, a sufficiently large prime $\mu$ and and odd integer $1 \le k \le \mu -2$ such that $a_0 b_0 = 1 - \frac{k}{r\mu}$ is arbitrarily close to the given value $d$. Moreover, each hyperbola $ab=p/q<1$ has at most finitely many segments $H_{(a_0,b_0)}$ given by \eqref{eq:hyperbolic-obstruction-b-interval} since for fixed $p$ and $q$ there are only finitely many factorizations of $q=r\mu$ with $r \ge 2$ and $\mu \ge 3$. This is in contrast to \eqref{eq:old_hyperbolic_obstructions} where each hyperbola $ab=p/q<1$ contains infinitely many hyperbolic segments of obstructions (one for each integer $b \ge 2$).

The points in the set $P$ are, see \Cref{fig:point-set-P}, distributed in distinct vertical bands $a_0 = \frac{1}{\mu}$ bounded from above by the horizontal line $b = \mu$, where the second coordinate is given by $b_0 = \mu - \frac{k}{r}$ with $k=1,2,\dots,\mu-2$ and $r \ge 2$. From $k \le \mu - 2$, we also get a lower bound on $b_0$ once $\mu$ is fixed:
\begin{equation}
  \label{eq:lower-mu-bound-on-b0}
  \frac{r-1}{r} \mu  < \frac{r-1}{r} \mu + \frac{2}{r} < b_0,
\end{equation}
thus,
\begin{equation}
  \label{eq:mu-bounds-on-b0}
  \tfrac{1}{2}\mu \le \frac{r-1}{r} \mu < b_0 < \mu,
\end{equation}

For $b_0$ confined to a horizontal band $N \le b_0 < N+1$ for some integer $N \ge 2$, we also get bounds on $\mu$. From $N < b_0 < \mu$, we get $\mu \ge N+1$. While, on the other hand, from \eqref{eq:mu-bounds-on-b0} we get $\tfrac{1}{2} \mu < b_0 < N+1$, hence $\mu < 2(N+1)$. Thus, $\mu \in \set{N+1, \dots, 2N+1}$.

With these bounds at hand, we can now analyze ``local gaps'' in $P$ around integer $b$ values. This is a phenomenon complementary to the accumulation of points at $(1/\mu, \mu)$ and shows a distinct asymmetry in the location of the points in $P$.
The proof is based on a version of the Pigeonhole Principle stating that an integer not equal to zero is larger than one as is typical in proofs on Diophantine approximations. The central idea is that from $b_0 - K = (\mu - K) - \frac{k}{r} \neq 0$ for any integer $K$, we get
\begin{equation}
  \abs{b_0 - K} = \frac{\abs{\mu r - K r - k}}{r} \ge \frac{1}{r}
\end{equation}
since $\mu r - K r - k$ is a non-zero integer. Thus, to lower bound the gap $\abs{b_0 - K}$, we need to maximize $r$ subject to the constraints on the parameters. This idea leads to the following local gap bounds:

\begin{proposition}[Local gaps along $a_0=1/\mu$]
  \label{lem:local-gaps-along-a-1-over-mu}
  Let $(a_0,b_0) \in P$ with $a_0 = 1/\mu$. Let $N = \floor{b_0} \ge 2$ be an integer such that $N \le b_0 < N+1$. Then
  \begin{equation}
    \label{eq:local-gap-lower-bound}
    b_0 - N \ge \frac{\mu - N}{\mu - 1}.
  \end{equation}
  and, if $\mu \ge N + 2$, then
  \begin{equation}
    \label{eq:local-gap-upper-bound}
    (N+1) - b_0 \ge \frac{\mu - N - 1}{\mu - 3}.
  \end{equation}
\end{proposition}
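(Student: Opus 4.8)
The plan is to reduce both gap inequalities to estimates on the single ratio $k/r$ coming from the parameterization $b_0 = \mu - \tfrac{k}{r}$ of \Cref{def:point-set-only-mu}, and then to exploit that $k$ and $r$ are integers. Since $r \ge 2$ and $\gcd(k,r)=1$, the ratio $k/r$ is not an integer, so $b_0 = \mu - \tfrac{k}{r}$ is non-integral and the band inclusion is strict, $N < b_0 < N+1$; in particular $\mu - N \ge 1$. Writing $b_0 - N = (\mu - N) - \tfrac{k}{r}$ and $(N+1) - b_0 = \tfrac{k}{r} - (\mu - N - 1)$, the strict band inequalities become, after clearing the integer denominator $r$, the integer bounds $k \le r(\mu - N) - 1$ (from $b_0 > N$) and $k \ge r(\mu - N - 1) + 1$ (from $b_0 < N+1$). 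Everything then follows from these together with the standing range constraint $1 \le k \le \mu - 2$.

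For \eqref{eq:local-gap-lower-bound} I would record two lower bounds on $b_0 - N = (\mu-N) - \tfrac{k}{r}$: the range constraint $k \le \mu - 2$ gives $b_0 - N \ge (\mu - N) - \tfrac{\mu - 2}{r}$, which increases with $r$, while the integrality bound $k \le r(\mu-N)-1$ gives $b_0 - N \ge \tfrac{1}{r}$, which decreases with $r$. The larger of two such monotone estimates is smallest at their crossover $r = \tfrac{\mu-1}{\mu-N}$, where both evaluate to $\tfrac{\mu-N}{\mu-1}$. Concretely: if $r \ge \tfrac{\mu-1}{\mu-N}$ then the first estimate is already $\ge \tfrac{\mu-N}{\mu-1}$, and if $r \le \tfrac{\mu-1}{\mu-N}$ then the second is; in either case \eqref{eq:local-gap-lower-bound} holds. (The denominator $\mu - 1$ is positive since $\mu \ge 3$.)

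For \eqref{eq:local-gap-upper-bound}, under the extra hypothesis $\mu \ge N+2$, only one constraint is active and the argument is shorter. From $b_0 < N+1$ and integrality I have $k \ge r(\mu - N - 1) + 1$, whence $(N+1) - b_0 = \tfrac{k}{r} - (\mu - N - 1) \ge \tfrac{1}{r}$. To convert this into the claimed bound I cap $r$ using the range constraint: chaining $r(\mu - N - 1) + 1 \le k \le \mu - 2$ yields $r \le \tfrac{\mu - 3}{\mu - N - 1}$, where $\mu - N - 1 \ge 1$ by hypothesis and $\mu - 3 \ge 1$ since $N \ge 2$ forces $\mu \ge 4$. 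Hence $\tfrac{1}{r} \ge \tfrac{\mu - N - 1}{\mu - 3}$, giving \eqref{eq:local-gap-upper-bound}.

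I expect the first inequality to be the only genuine obstacle: a single one of the two elementary estimates is insufficient, and the content is that the bound ``$b_0$ is pushed away from $N$ when $r$ is large'' (forced by $k \le \mu-2$) and the complementary bound ``$b_0$ is pushed away from $N$ when $r$ is small'' (forced by the integrality $k \le r(\mu-N)-1$) balance exactly at $r = \tfrac{\mu-1}{\mu-N}$ to produce the stated constant. Before finalizing I would double-check the extreme parameter values --- that $\mu - N \ge 1$ in the first part and $\mu \ge 4$ in the second --- so that all denominators are positive and no estimate is vacuous; the coprimality hypotheses are needed only to guarantee the strict band membership.
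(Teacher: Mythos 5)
Your proposal is correct and rests on essentially the same idea as the paper's proof: the integrality bound ($X = r(\mu-N)-k \ge 1$) combined with the range constraint $k \le \mu-2$ used to cap $r$, the only difference being bookkeeping --- the paper substitutes $k = r(\mu-N)-X$ into $k \le \mu-2$ to get $r \le \frac{\mu-2+X}{\mu-N}$ and then minimizes $\delta \ge \frac{(\mu-N)X}{\mu-2+X}$ at $X=1$, whereas you keep $r$ as the free variable and split cases at the crossover $r = \frac{\mu-1}{\mu-N}$. Your explicit treatment of \eqref{eq:local-gap-upper-bound} (which the paper omits as ``similar''), including the checks that $\mu-N-1 \ge 1$ and $\mu-3 \ge 1$, is also sound.
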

\begin{proof}
  We only prove \eqref{eq:local-gap-lower-bound} as the proof of \eqref{eq:local-gap-upper-bound} is similar. Let $\delta = b_0 - N$. Substituting the expression for $b_0 = \mu - \frac{k}{r}$, we get:
  \[
    \delta = (\mu - N) - \frac{k}{r}.
  \]
  Let $M = \mu - N \in \Z$. Since $b_0 < \mu$, it follows that $M$ is a positive integer. We rewrite $\delta$ as:
  \[
    \delta = M - \frac{k}{r} = \frac{Mr - k}{r}.
  \]
  Let $X = Mr - k \in \Z$. Since $\delta > 0$, we have $X > 0$. As $X$ is an integer, we even have that $X \ge 1$.
  Substituting $k = Mr - X$ into the constraint $k \le \mu - 2$ yields:
  \[
    Mr - X \le \mu - 2.
  \]
  and thus
  \begin{equation}
    \label{eq:r-upper-bound}
    r \le \frac{\mu - 2 + X}{M}.
  \end{equation}

  To minimize $\delta = \frac{X}{r}$, we must maximize the denominator $r$. Using the bound derived in \eqref{eq:r-upper-bound}, we get
  \[
    \delta \ge \frac{X}{\frac{\mu - 2 + X}{M}} = \frac{MX}{\mu - 2 + X}.
  \]
  The function $f(x) = \frac{M x}{\mu -2 + x}$ is strictly increasing for $x > 0$. Thus, the minimum gap occurs at the smallest integer $X=1$:
  \[
    \delta \ge f(1) = \frac{M}{\mu - 2 + 1} = \frac{\mu - N}{\mu - 1}.
  \]
\end{proof}

While the points in $P$ accumulate to $(1/\mu, \mu)$ from below along $a=1/\mu$, it follows from \Cref{lem:local-gaps-along-a-1-over-mu} that there are no points in $P$ just above integer $b$ values:

\begin{corollary}[Gaps above integer $b$ values]
  Let $(a_0,b_0) \in P$, and let $N = \floor{b_0} \ge 2$. Then
  \[
    b_0 \not\in \itvco*{N, N + \frac{1}{N}}
  \]
\end{corollary}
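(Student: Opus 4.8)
The plan is to reduce the claim directly to the lower bound established in \Cref{lem:local-gaps-along-a-1-over-mu}. Writing $a_0 = 1/\mu$ and $b_0 = \mu - k/r$ as usual, I first note that since $N = \floor{b_0}$ we automatically have $N \le b_0 < N+1$, so the proposition applies and yields $b_0 - N \ge \frac{\mu - N}{\mu - 1}$. Showing $b_0 \notin \itvco*{N, N+\frac{1}{N}}$ is equivalent to proving $b_0 - N \ge \frac{1}{N}$, so it suffices to verify the purely arithmetic inequality $\frac{\mu - N}{\mu - 1} \ge \frac{1}{N}$.

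The key input is the band bound $\mu \ge N+1$, which follows from $\mu > b_0 \ge N$ together with $\mu \in \Z$ (this is exactly the lower end of the range $\mu \in \set{N+1,\dots,2N+1}$ derived in the preceding discussion). Given $\mu \ge N+1 \ge 3$ and $N \ge 2$, both denominators $\mu - 1$ and $N$ are positive, so I may clear them: the target inequality becomes $N(\mu - N) \ge \mu - 1$, i.e. $\mu(N-1) \ge N^2 - 1 = (N-1)(N+1)$. Dividing by $N-1 > 0$ (legitimate since $N \ge 2$) leaves precisely $\mu \ge N+1$, which holds. Chaining the two bounds gives $b_0 - N \ge \frac{\mu-N}{\mu-1} \ge \frac{1}{N}$, as required.

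There is essentially no obstacle here: all the substantive work sits in \Cref{lem:local-gaps-along-a-1-over-mu}, and what remains is elementary algebra. The only point deserving care is justifying $\mu \ge N+1$ — one must use that $\mu$ is an \emph{integer} strictly larger than $b_0 \ge N$, rather than merely $\mu > N$ — and confirming the strictness $b_0 > N$ (so that $b_0$ is never the integer $N$ itself), which holds because $b_0 = \mu - k/r$ with $1 \le k \le \mu - 2$, $r \ge 2$ and $\gcd(k,r)=1$ forces $k/r \notin \Z$.
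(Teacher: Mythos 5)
Your proof is correct and takes essentially the same route as the paper: both apply the lower bound \eqref{eq:local-gap-lower-bound} from \Cref{lem:local-gaps-along-a-1-over-mu} together with the band bound $\mu \ge N+1$ to deduce $b_0 - N \ge \frac{\mu-N}{\mu-1} \ge \frac{1}{N}$. The only cosmetic difference is that you verify the final inequality by clearing denominators, whereas the paper invokes monotonicity of $x \mapsto \frac{x-N}{x-1}$ evaluated at $x=N+1$ (incidentally, that function is increasing, not decreasing as the paper states, so your elementary algebra sidesteps a harmless slip in the paper's wording).
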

\begin{proof}
  Recall that $\mu \in \set{N+1, \dots, 2N+1}$.
  The function $f(x) = \frac{x-N}{x-1}$ is decreasing for $x > N$. Thus, from \eqref{eq:local-gap-lower-bound} we get
  \[
    b_0 - N \ge \frac{\mu - N}{\mu - 1} \ge  f(N+1) = \frac{N+1 - N}{N+1 - 1} = \frac{1}{N}.
  \]
  Hence, $b_0 \not\in \itvco*{N, N + \frac{1}{N}}$.
\end{proof}

\begin{figure}[hb]
  \centering
  \begin{tikzpicture}
    \begin{axis}[
        xlabel=$a$,
        ylabel=$b$,
        width=14.5cm,
        height=6cm,
        grid=major,
        grid style={gray!30},
        axis lines=left,
        xmin=0, xmax=0.34,
        ymin=3.55, ymax=5.45,
        xtick={0, 0.111, 0.2, 0.25},
        xticklabels={$0$, $\frac{1}{2N+1}$, $\frac{1}{N+1}$, $\frac{1}{N}$},
        ytick={3.5, 4, 4.5, 5, 5.5},
        yticklabels={, $N$, $N+\frac{1}{2}$, $N+1$, },
        legend pos=north east,
        legend cell align=left,
        xlabel style={at={(axis description cs:1,-0.05)}, anchor=north},
        ylabel style={at={(axis description cs:-0.05,1)}, anchor=south},
      ]

      \addplot[
        name path=upper,
        domain=0:0.2,
        samples=200,
        color=violet,
        thick,
        dashed,
        dash pattern=on 3pt off 3pt on 1pt off 3pt,
        line width=1.5pt
      ] {4/(1-x)};
      \addlegendentry{$b(1-a)=N$}

      \addplot[
        name path=lower,
        domain=0:0.25,
        samples=200,
        color=sienna,
        thick,
        dashed,
        dash pattern=on 3pt off 3pt on 1pt off 3pt,
        line width=1.5pt
      ] {5/(1+x)};
      \addlegendentry{$b(1+a)=N+1$}

      \addplot[
        domain=0.17:0.3,
        samples=100,
        color=black,
        dashed,
        line width=1.5pt,
        opacity=0.3
      ] {1/x};
      \addlegendentry{$ab=1$}

      \addplot[
        domain=0.085:0.15,
        samples=100,
        color=red,
        dashed,
        line width=1.5pt,
        opacity=0.3
      ] {1/(2*x)};
      \addlegendentry{$ab=1/2$}

      \addplot[
        domain=0:0.25,
        samples=2,
        color=black,
        thick,
        forget plot
      ] {4};

      \addplot[
        domain=0:0.2,
        samples=2,
        color=black,
        thick,
        forget plot
      ] {5};

      \addplot[
        fill=lightorange,
        opacity=0.3,
        forget plot
      ] fill between[of=upper and lower, soft clip={domain=0.111:0.25}];

      \node[font=\large, color=black] at (0.20, 4.5) {$T_N$};
    \end{axis}
  \end{tikzpicture}
  \caption{Sketch of the Janssen tie in $b \in \itvcc{N, N+1}$. The shaded region shows the tile $T_N$.}
  \label{fig:tie-sketch}
\end{figure}
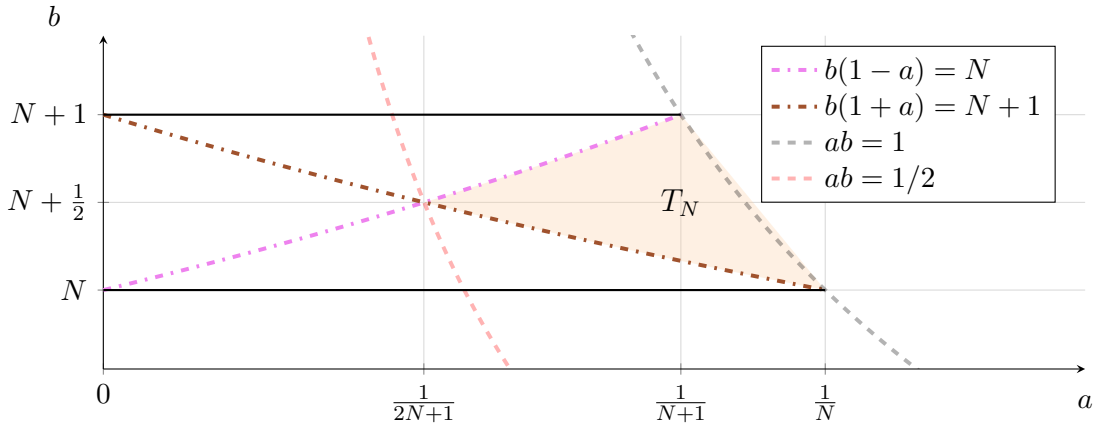
The Janssen tie is a partition of $ab \le 1$ into curvilinear tiles that Janssen \cite{MR1955931} realized were relevant for the study of $\cF(N_1)$. For $ b \in \itvcc{N, N+1}$, $N \in \N$, the tiles are regions bounded by the hyperbolas $b(1-a) = N$, $b(1+a) = N+1$, and $ab=1$ and the lines $a=0$, $b=N$ and $b=N+1$. Of interests in this work is the ``right half'' of the bow tie, namely:
\begin{equation}
  T = \set*{ (a,b) \in \R_+^2 \given \frac{\ceil{b}}{1+a} \le b \le \frac{\floor{b}}{1-a}, \; \; b>2, \; ab < 1 }.
\end{equation}
We denote by $T_N$ each individual tile (see \Cref{fig:tie-sketch}) such that $T=\bigcup_{N=2}^{\infty} T_N$, where
\[
  T_N = \set*{ (a,b) \in \R_+^2 \given \frac{N+1}{1+a} \le b \le \frac{N}{1-a}, \; ab < 1 }
\]
for $N \ge 2$. The $T_N$ region of the tie is where the frame property of the box spline $N_1$ is known to have the most complex behavior~\cite{MR3545108,MR2407006,MR1955931}. Indeed, in this region both rational and irrational $ab$ values can be associated with frame and non-frame properties of $\gaborG{N_1}$ as already reported by Janssen~\cite{MR1955931}.

The geometric structure of the sets $P$ and $H$ can be captured by their containment within these hyperbolic tiles:
\begin{theorem}
  \label{thm:hyperbolic-segments-contained-in-tiles}
  The point set $P$ defined by \eqref{eq:point-set-only-mu} and the hyperbolic obstruction set $H$ defined in \eqref{eq:hyperbolic-obstruction-set} satisfy
  \begin{equation}
    P \subset H \subset T.
  \end{equation}
\end{theorem}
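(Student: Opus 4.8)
The plan is to treat the two inclusions separately, with $P\subset H$ being immediate and $H\subset T$ carrying the real content. For $P\subset H$, observe that each $(a_0,b_0)\in P$ satisfies $a_0b_0=p/q$ and sits at the center of its own segment, where $|b_0-b_0|=0$, while the half-width $\tfrac{1}{nq}\bigl(\mu-(q-p+1)\bigr)=\tfrac{\mu-k-1}{nr\mu}$ is strictly positive because $k\le\mu-2$ forces $\mu-k-1\ge1$. Hence $(a_0,b_0)\in H_{(a_0,b_0)}\subset H$, and no further work is needed here.

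For $H\subset T$ I would first record the clean reformulation of the tile boundaries along a fixed hyperbola. If $ab=p/q$, then $b\le\frac{\floor{b}}{1-a}$ is equivalent to $b-ab\le\floor{b}$, i.e. $b\le\floor{b}+\frac pq$, and likewise $b\ge\frac{\ceil{b}}{1+a}$ is equivalent to $b\ge\ceil{b}-\frac pq$. Thus, restricted to the hyperbola $ab=p/q$, the two curvilinear sides of the tile degenerate into the horizontal lines $b=\floor{b}+\frac pq$ and $b=\ceil{b}-\frac pq$. Since the remaining defining conditions of $T$, namely $ab<1$ and $b>2$, are already in hand ($ab<1$ was established in \Cref{sec:point-set-properties}, and $b>2$ will follow once the segment is trapped strictly between consecutive integers), the membership problem collapses to a one-dimensional containment of $b$-intervals.

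Concretely, fix $(a_0,b_0)\in P$ and set $N=\floor{b_0}$ and $M=\mu-N$. Because $\gcd(k,r)=1$ with $r\ge2$, the quotient $k/r$ is not an integer, so $b_0=\mu-k/r$ is not an integer and $N<b_0<N+1$; rewriting this as $M-1<k/r<M$ shows $M\ge1$ and, crucially, that both $i:=rM-k$ and $j:=k-r(M-1)$ are positive integers (with $i+j=r$). I would then prove that the segment $H_{(a_0,b_0)}$, namely $b\in\itvcc{b_0-w,b_0+w}$ with $w=\tfrac{\mu-k-1}{nr\mu}$, lies inside $\itvcc{N+1-\tfrac pq,\,N+\tfrac pq}$. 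Since $\tfrac pq<1$, this interval is strictly contained in $(N,N+1)$, which simultaneously forces $\floor{b}=N$, $\ceil{b}=N+1$, and $b>N\ge2$ (the bound $N\ge2$ coming from $b_0\ge(\mu+2)/2\ge5/2$, which uses $k\le\mu-2$ and $r\ge2$). Membership in $T$ then follows from the reformulation of the previous paragraph.

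The decisive step is therefore the pair of scalar inequalities $b_0+w\le N+\tfrac pq$ and $b_0-w\ge N+1-\tfrac pq$. Substituting $b_0=\mu-k/r$, $N=\mu-M$, and $\tfrac pq=1-\tfrac{k}{r\mu}$, I expect the first, after writing $k=r(M-1)+j$, to reduce to comparing $w$ with $\tfrac1r\bigl(j-\tfrac k\mu\bigr)$, and the second, after writing $k=rM-i$, to reduce to $i\mu-k\ge\tfrac{\mu-k-1}{n}$. This is exactly the Diophantine mechanism flagged before \Cref{lem:local-gaps-along-a-1-over-mu}: the integrality $i\ge1$ and $j\ge1$ supplies a lower bound of order one, which dominates the fractional quantities $\tfrac k\mu<1$ and $\tfrac{\mu-k-1}{n}\le\mu-k-1<\mu-k$, so both inequalities should hold for every admissible order $n\ge1$ (the largest, hence worst, segment occurring at $n=1$). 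Carrying out these two comparisons cleanly — and in particular tracking the constraint $k\le\mu-2$, which guarantees $\mu-k\ge2$ — is the main obstacle, though ultimately a routine one; everything else is bookkeeping around the parameterization of $P$ from \Cref{def:point-set-only-mu}.
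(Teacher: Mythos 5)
Your proposal is correct and follows essentially the same route as the paper's proof: you linearize the tile boundaries along the hyperbola $ab=p/q$ (eliminating $a$), reduce to checking the two endpoint inequalities $b_0\pm w$ against $N+\tfrac pq$ and $N+1-\tfrac pq$, and close the argument via the integrality of $i=rM-k$ and $j=k-r(M-1)$, which are precisely the paper's auxiliary integers $X$ and $Y$. The only difference is cosmetic: you are slightly more explicit about trapping the segment in $(N,N+1)$ to pin down $\floor{b}$, $\ceil{b}$, and $b>2$, which the paper leaves implicit.
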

\begin{proof}
  The inclusion $P \subset H$ is obvious from the definitions. On the other hand, the proof of the containment $H \subset T$ will require rather tight bounds, as is illustrated below in \Cref{fig:hyperbola-set_zoom_b_21-23} and, in particular, in \Cref{fig:hyperbola-set_zoom_b_200}. Visually,
  the hyperbolic segments in $H$ appear only barely to be contained within the tiles $T_N$.

  Let $(a,b) \in H$ be a point in the hyperbolic segment with ``center'' $(a_0, b_0) \in P$, $a_0 = 1/\mu$, and $b_0 = \mu - k/r$. Let $N = \floor{b_0}$. We need to verify that $a$ and $b$ satisfy $b(1+a) \ge N+1$ and $b(1-a) \le N$.

  Since $ab = a_0 b_0$ is constant for the entire segment, we can get rid of the variable $a$ and linearize the inequalities in $b$:
  \begin{align*}
    b(1+a) & \ge N+1 \iff b + ab \ge N+1 \iff b \ge (N+1) - a_0 b_0, \\
    b(1-a) & \le N \iff b - ab \le N \iff b \le N + a_0 b_0
  \end{align*}
  Let us first consider the lower boundary $b \ge (N+1) - a_0 b_0$. It suffices to show that the lower endpoint of the segment satisfies this inequality, i.e., that the inequality holds for $b= b_0 - \Delta b$, where $\Delta b = \frac{1}{nr}\left(1 - \frac{k+1}{\mu}\right)$. Hence, we need to verify that
  \begin{equation}
    \label{eq:lower-boundary-check}
    \Delta b \le b_0(1+a_0) - (N+1).
  \end{equation}
  We evaluate the right-hand side of \eqref{eq:lower-boundary-check}:
  \begin{align*}
    b_0(1+a_0) - (N+1) & = \left(\mu - \frac{k}{r}\right)\left(1 + \frac{1}{\mu}\right) - N - 1 \\
                       & = (\mu - N) - \frac{k}{r} - \frac{k}{r\mu}.
  \end{align*}
  Let $X = r(\mu - N) - k \in \Z$. Using $\mu - N - k/r = X/r$, the right-hand side reduces to $\frac{1}{r}\left(X - k/\mu\right)$. Thus, \eqref{eq:lower-boundary-check} can be rewritten as
  \[
    \frac{1}{nr}\left(1 - \frac{k+1}{\mu}\right) \le \frac{1}{r}\left(X - \frac{k}{\mu}\right).
  \]
  which is equivalent to
  \begin{equation}
    \label{eq:lower-boundary-simplified}
    \left(1 - \frac{1}{\mu}\right) + \frac{k}{\mu} (n-1) \le n X. 
  \end{equation}
  From $b_0 = \mu - k/r > N$, we conclude that $X$ is a positive integer, i.e., $X \ge 1$. Thus, since $1 - 1/\mu < 1$ and $\frac{k}{\mu} (n-1) \le n-1$, the inequality \eqref{eq:lower-boundary-simplified} holds which verifies the lower boundary condition.

  To verify the upper boundary condition, we similarly need to show that $\Delta b \le N - b_0(1 - a_0)$.  Let $M = \mu - N \in \Z$. Since $b_0 < N+1$, we have $M \ge 1$. Let $Y = k - r(M-1) \in \Z$. Since $b_0 < N+1$, the integer $Y$ is positive, i.e., $Y \ge 1$. A straightforward calculation shows that the right-hand side reduces to
  \[
    \frac{1}{r}\left(Y - \frac{k}{\mu}\right).
  \]
  From here, the rest of the proof is identical to the lower boundary case, but with $Y$ replacing $X$, thus we omit the details.
\end{proof}

\begin{figure}
  \centering
  \begin{subfigure}[b]{0.48\textwidth}
    \centering
    \includegraphics[width=\textwidth]{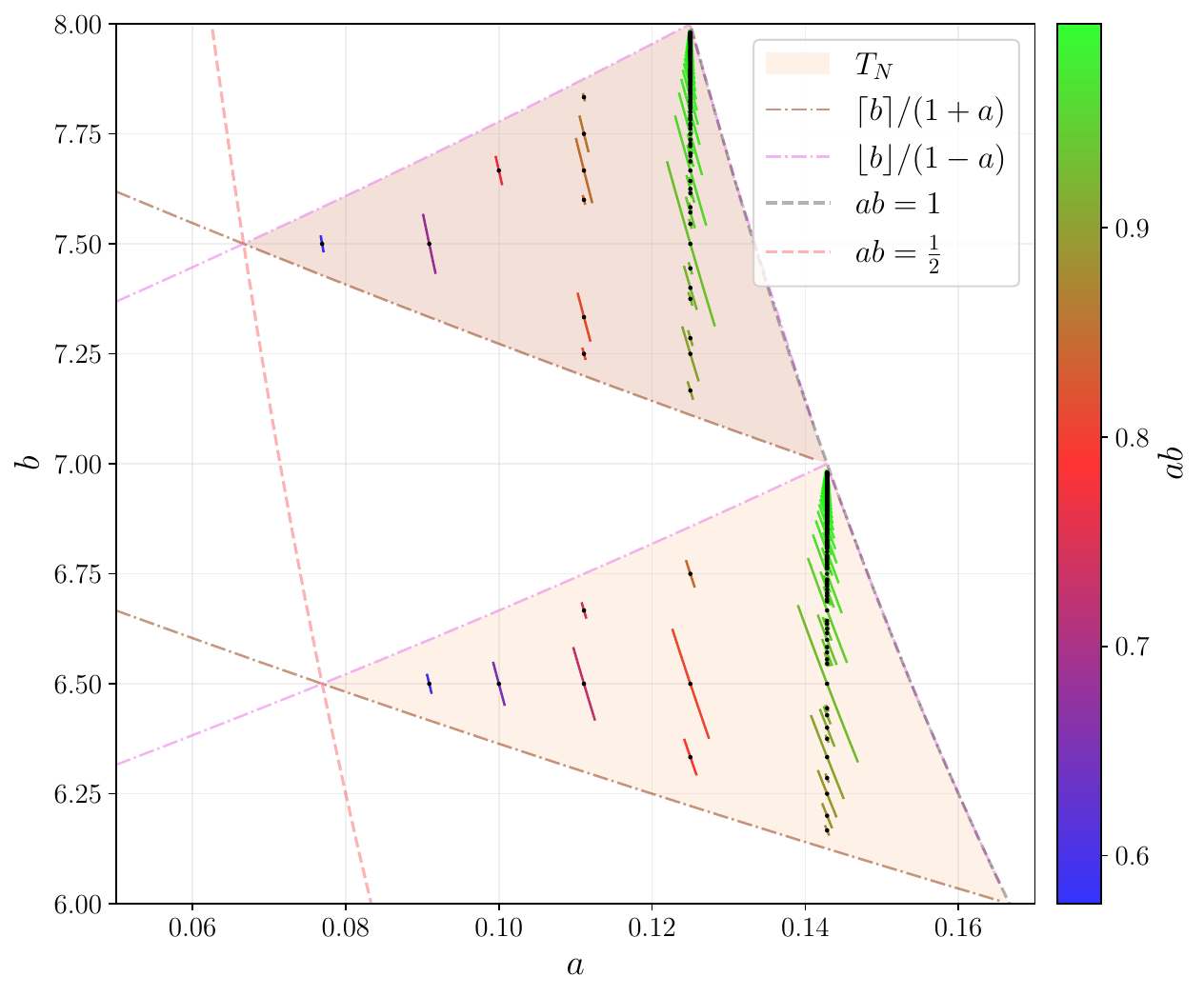}
    \caption{Zoom for $b \in [6,8]$ showing $T_N$ for $N=6$ and $N=7$.}
    \label{fig:hyperbola-set_zoom_b_6-8}
  \end{subfigure}
  \hfill
  \begin{subfigure}[b]{0.48\textwidth}
    \centering
    \includegraphics[width=\textwidth]{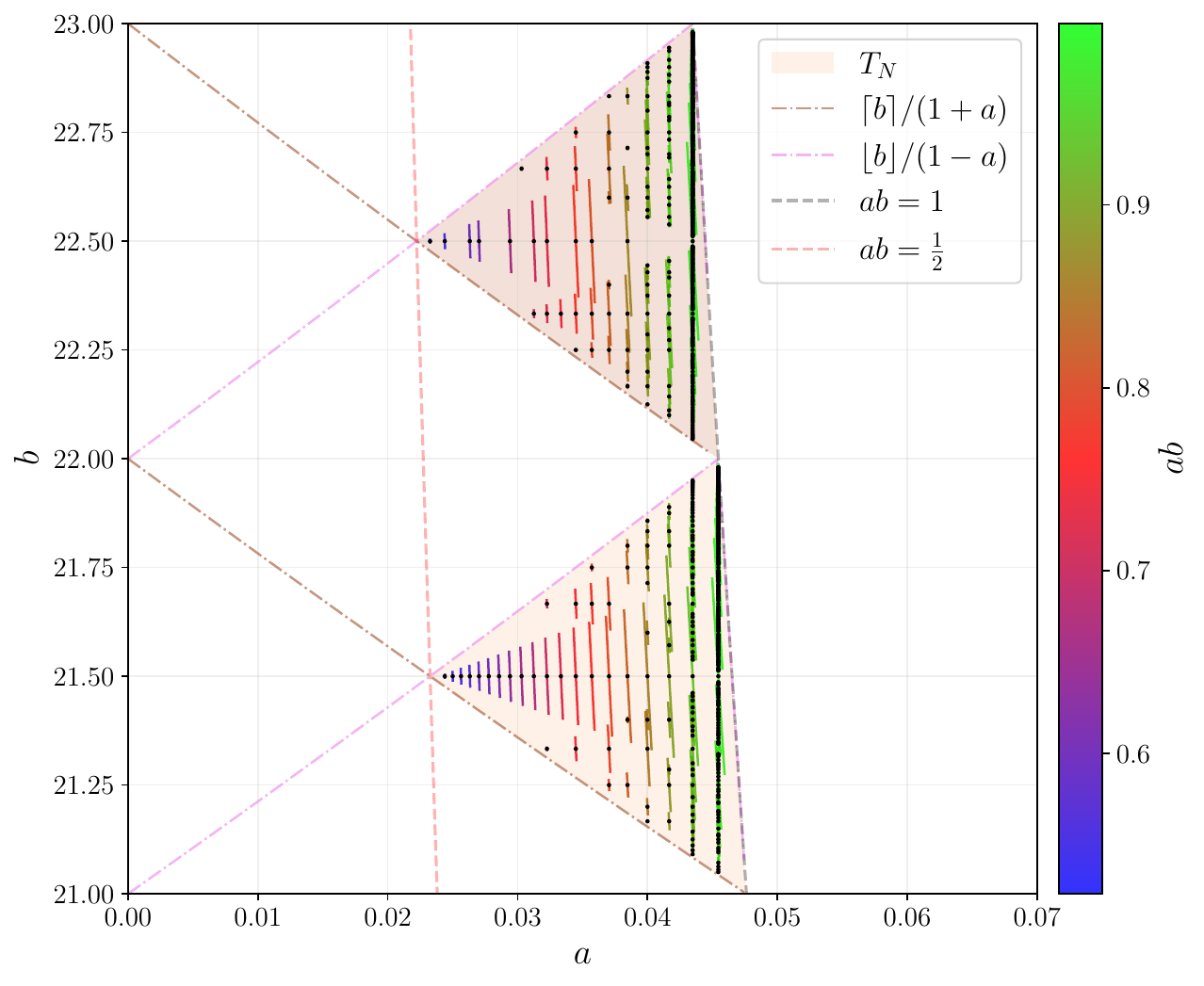}
    \caption{Zoom for $b \in [21,23]$ showing $T_N$ for $N=21$ and $N=22$.}
    \label{fig:hyperbola-set_zoom_b_21-23}
  \end{subfigure}
  \caption{The obstruction point set $P$ (marked by black dots) and the hyperbolic obstruction set $H$ for $N_n$, $n=2$, with hyperbolic segments colored by their $ab$ values. The shaded regions represent the ``right half'' of Janssen's tie $T_N$, whose upper and lower boundary curves are shown in dash-dot lines. Note that the plots are restricted to $r \le 50$ to avoid excessive cluttering at the accumulation points.}
  \label{fig:hyperbola-set_zoom}
\end{figure}

The structure of the point set $P$ and the hyperbolic obstruction set $H$ depend intricately on the
number theoretic properties of $a_0$ and $b_0$ (and thus in turn of $\mu$, $r$, and $k$). As $b$ increases, the structure of $P$ and $H$ increases in complexity.
The reason is that larger $\mu$ values have more divisors, yielding additional valid $r$ values in \Cref{def:point-set-only-mu}, and thus denser obstruction patterns.  This phenomenon is illustrated in the visualizations in
\Cref{fig:hyperbola-set_zoom_b_6-8}, \Cref{fig:hyperbola-set_zoom_b_21-23}, and \Cref{fig:hyperbola-set_zoom_b_200}.
\begin{figure}
  \centering
  \includegraphics[width=0.9\textwidth]{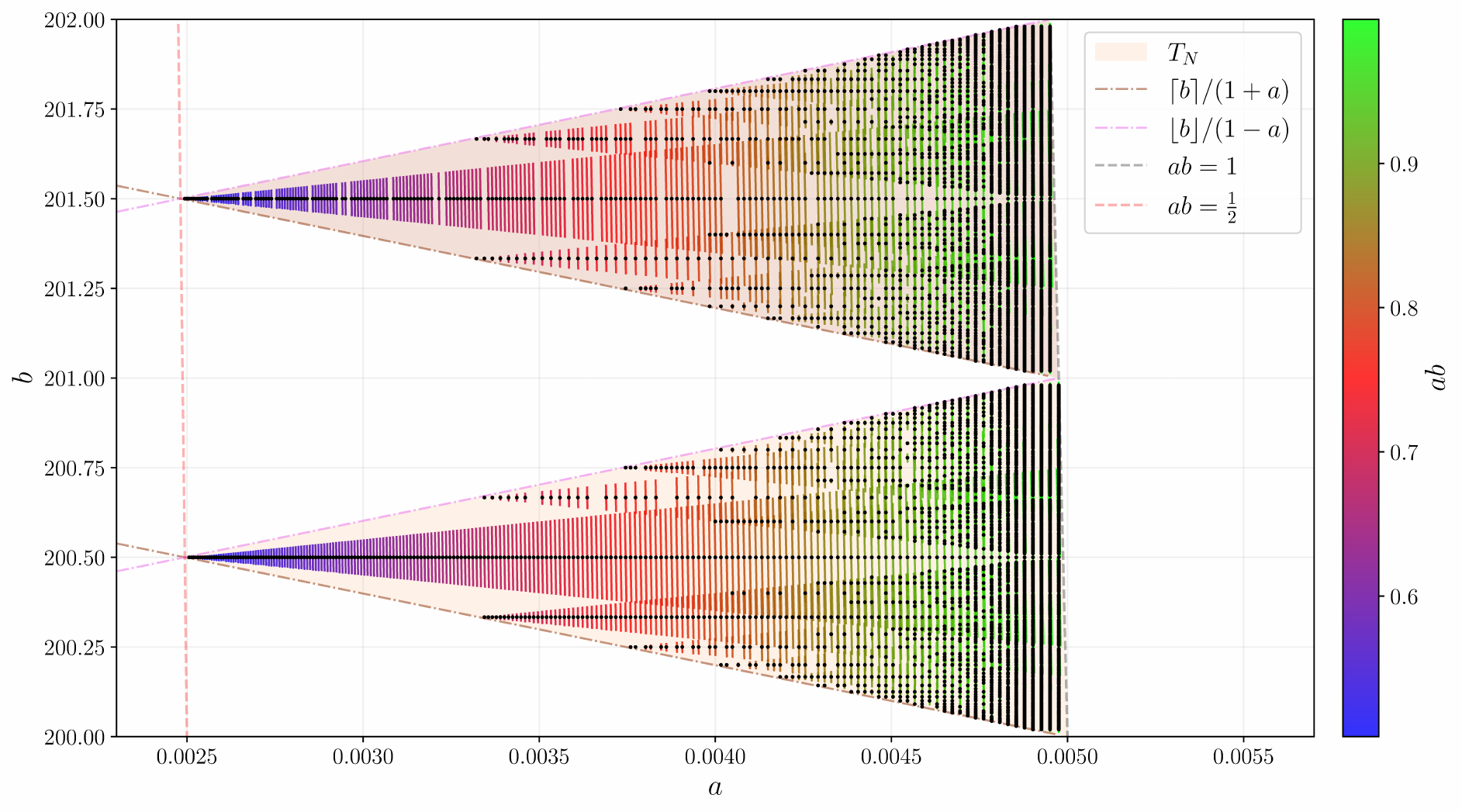}
  \caption{The point set $P$ (black dots) and the hyperbolic obstruction set $H$ for $N_2$ (i.e., $n=2$) for large values of $b$. The hyperbolic segments are colored by their $ab$ values. The complexity of the structure of the sets increases as $b$ increases, compared to \Cref{fig:hyperbola-set_zoom}.}
  \label{fig:hyperbola-set_zoom_b_200}
\end{figure}

The length of each hyperbolic segment $H_{(a_0,b_0)}$ is controlled by
\[
  \abs{b - b_0}  \le \frac{1}{nr} \frac{\mu-k-1}{\mu} = \frac{1}{nq} (\mu-k-1).
\]
Since $k \le \mu - 2$, the possible lengths (in the $b$ direction) of the hyperbolic segments are $\frac{2}{n\mu r} \cdot 1, \frac{2}{n\mu r}\cdot 2, \dots, \frac{2}{n\mu r} (\mu-2)$, where $q=\mu r$. Compared to the hyperbolic obstructions in \cite{MR3572909} of length $\frac{2}{n\mu r}$ (in the $b$ direction), we see that the new obstructions in $H$ can be up to $(\mu-2)$ times longer. Note that, as the order of the B-spline $n$ has a simple scaling effect, we illustrate only the case $n=2$ in the figures.

Let us finally return to the accumulation points of $P$, and see how the hyperbolic obstruction curves in $H$ behave near these points. In \Cref{fig:hyperbolas-r-colord-black-centers}, the hyperbolic obstruction set $H$ is visualized for $\mu = 5$ near the accumulation point $(1/5,5)$. The hyperbolic segments accumulate to the point $(1/5,5)$ from below in the band $a_0 = 1/5$ with arc lengths smaller than $2/(nr)$. As $r$ increases during this accumulation, the segment lengths decrease, approaching zero in the limit.

\begin{figure}
  \centering
  \includegraphics[width=0.6\textwidth]{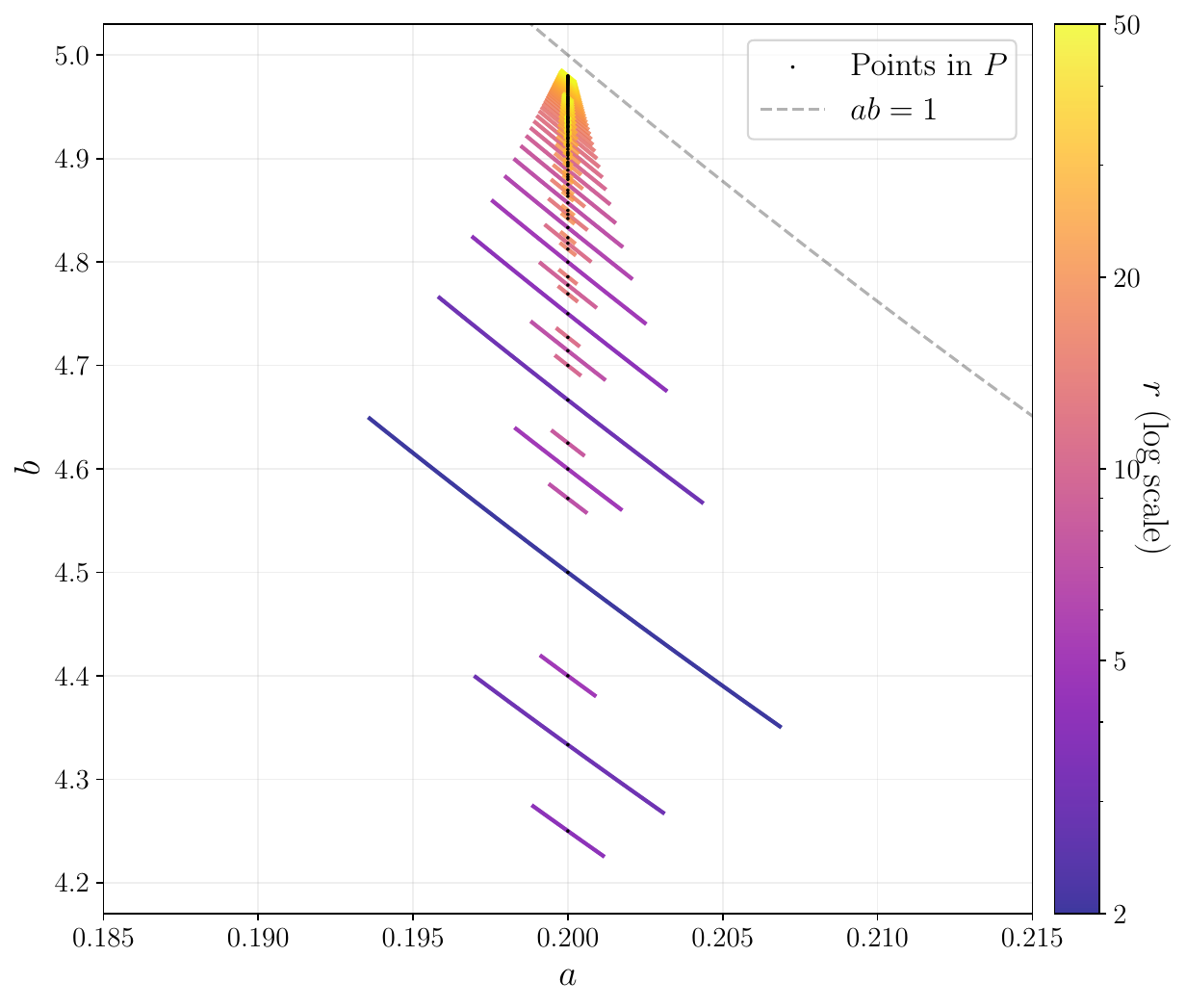}
  \caption{Zoomed view of the hyperbolic obstruction set $H$ for $N_n$, $n=2$, near the accumulation point $(1/5, 5)$ ($\mu = 5$). The points $P$ are marked with black dots, and the hyperbolic curves are colored by their $r$ values (log scale). Larger $r$ values produce shorter obstruction curves in $H$ as the length of each hyperbolic segment is bounded by $2/(nr)$. The plot is restricted to $r \le 50$ for visibility hence the accumulation is truncated.}
  \label{fig:hyperbolas-r-colord-black-centers}
\end{figure}

\section{Proof of Theorem~\ref{TEST}}
\label{sec:proof-main-theorem}

We need the following result from \cite{MR3572909} that gives a partly partition of unity property for B-splines under dilations when the dilation parameter $c$ is close to an integer.

\begin{lemma}[\!\cite{MR3572909}]
  \label{lem:partly-pou}
  Let $n \in \N$ and $c>0$. Assume that $\abs{\sfrac{c}} \le
    \frac{1}{n}$.
  \begin{enumerate}[label=(\roman*)]
    \item If $\sfrac{c}\ge 0$, then
          \begin{equation}
            \label{eq:partly-part-of-unity-1}
            \sum_{k \in \Z} N_n((x+k)/c) = \mathrm{const} \quad \text{for }
            x \in \bigcup_{m\in \Z}\itvcc{m+n\sfrac{c}, m+1}
          \end{equation}
    \item If $\sfrac{c}\le 0$, then
          \begin{equation}
            \label{eq:partly-part-of-unity-2}
            \sum_{k \in \Z} N_n((x+k)/c) = \mathrm{const} \quad \text{for }
            x \in \bigcup_{m\in \Z}\itvcc{m, m+1+n\sfrac{c}}
          \end{equation}
  \end{enumerate}
\end{lemma}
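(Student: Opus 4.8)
The plan is to reduce everything to a clean induction on the spline order $n$ via the B-spline derivative identity. Throughout I would write $G_n(x) = \sum_{k \in \Z} N_n((x+k)/c)$ and observe that $G_n$ is $1$-periodic in $x$, since the substitution $x \mapsto x+1$ merely reindexes the sum by $k \mapsto k-1$. By periodicity it then suffices to prove constancy on the $m=0$ component of each union: in case (i) ($\sfrac{c}\ge 0$) on $\itvcc{n\sfrac{c},1}$, and in case (ii) ($\sfrac{c}\le 0$) on $\itvcc{0,1+n\sfrac{c}}$. The common value is automatically the same on every other component by periodicity.

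The engine of the proof is a derivative recursion. Applying the standard relation $N_n'(t) = N_{n-1}(t) - N_{n-1}(t-1)$ together with the chain rule gives
\[
  G_n'(x) = \frac{1}{c}\sum_{k \in \Z}\Bigl[ N_{n-1}\bigl(\tfrac{x+k}{c}\bigr) - N_{n-1}\bigl(\tfrac{x-c+k}{c}\bigr)\Bigr] = \frac{1}{c}\bigl[G_{n-1}(x) - G_{n-1}(x-c)\bigr].
\]
Because $G_{n-1}$ is $1$-periodic and $c - \round{c} = \sfrac{c}$, I may replace $x-c$ by $x-\sfrac{c}$, obtaining $G_n'(x) = \tfrac{1}{c}\bigl[G_{n-1}(x) - G_{n-1}(x-\sfrac{c})\bigr]$. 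Hence $G_n$ is constant on an interval exactly when $G_{n-1}$ takes the same value at $x$ and at $x-\sfrac{c}$ throughout that interval.

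For case (i) I would induct on the claim ``$G_n$ is constant on $\itvcc{n\sfrac{c},1}$''. The base case $n=1$ is a direct lattice-point count: $G_1(x) = \#\set*{k\in\Z \given -x \le k \le c-x}$ equals $\round{c}$ for $x \in \itvoc{\sfrac{c},1}$, since the length-$c$ interval $\itvcc{-x,c-x}$ contains exactly $\round{c}$ integers once $x$ passes the threshold $\sfrac{c}$. For the step with $n \ge 2$, both $x$ and $x-\sfrac{c}$ lie in $\itvcc{(n-1)\sfrac{c},1}$ whenever $x \in \itvcc{n\sfrac{c},1}$ (indeed $x \ge n\sfrac{c} \ge (n-1)\sfrac{c}$ and $x-\sfrac{c} \in \itvcc{(n-1)\sfrac{c},1-\sfrac{c}}$), so by the inductive hypothesis $G_{n-1}(x) = G_{n-1}(x-\sfrac{c})$; thus $G_n' \equiv 0$ a.e.\ on the interval, and since $G_n$ is continuous for $n \ge 2$ it is constant there. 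Case (ii) is entirely symmetric: the target is $\itvcc{0,1+n\sfrac{c}}$, the base case is the same count, and the containment $x, x-\sfrac{c} \in \itvcc{0,1+(n-1)\sfrac{c}}$ for $x \in \itvcc{0,1+n\sfrac{c}}$ (using $\sfrac{c}\le 0$) drives the step.

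The work here is bookkeeping rather than a deep obstacle: the essential point is verifying that \emph{both} $x$ and its shift $x-\sfrac{c}$ remain inside the previous interval, and this is precisely where the hypothesis $\abs{\sfrac{c}} \le \tfrac1n$ is consumed, as it guarantees the target interval is nonempty (the interval shrinks by $\abs{\sfrac{c}}$ at each step, so it survives exactly $n$ steps). Two minor care points remain: the base case $n=1$ must be handled by the direct count rather than the recursion, since $N_1' = \delta_0 - \delta_1$ is only distributional so the recursion is legitimate for $n \ge 2$; and $G_1$ has a jump at the single point $x = \sfrac{c}$, but this is a measure-zero wrinkle that does not affect ``$G_n' = 0$ a.e.\ plus continuity of $G_n$ for $n\ge 2$,'' which is how I would conclude constancy.
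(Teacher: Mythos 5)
The paper does not prove this lemma at all: it is quoted from \cite{MR3572909} and used as a black box, so there is no internal proof to compare your argument against, and it must stand on its own. It essentially does. The three ingredients---$1$-periodicity of $G_n$, the recursion $G_n'(x)=\tfrac1c\bigl[G_{n-1}(x)-G_{n-1}(x-\sfrac{c})\bigr]$ obtained from $N_n'=N_{n-1}-N_{n-1}(\cdot-1)$ plus periodicity, and the bookkeeping that both $x$ and $x-\sfrac{c}$ stay inside the level-$(n-1)$ interval---are all correct, and you correctly locate where $\abs{\sfrac{c}}\le 1/n$ is consumed (the interval $\itvcc{n\sfrac{c},1}$, resp.\ $\itvcc{0,1+n\sfrac{c}}$, must survive $n$ shrinkages); note also that the inductive hypothesis at level $n-1$ only requires the weaker bound $\abs{\sfrac{c}}\le 1/(n-1)$, so the induction is internally consistent. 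The sum is locally finite by compact support, so the term-by-term differentiation is legitimate.

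Two spots need tightening, neither fatal. First, the base case: with the paper's convention $N_1=\chi_{\itvcc{0,1}}$, the count $G_1(x)=\#\set{k\in\Z \given -x\le k\le c-x}$ equals $\round{c}$ only on the \emph{open} interval $\itvoo{\sfrac{c},1}$; at $x=1$ the interval $\itvcc{-1,c-1}$ contains $\round{c}+1$ integers, so $G_1$ jumps at \emph{both} endpoints, not only at $x=\sfrac{c}$ as you claim. This is harmless, because your induction step only feeds $G_1$ into an integral, so a.e.\ constancy suffices (and indeed, for $n=1$ the lemma itself holds only up to these measure-zero endpoint sets under the closed-indicator convention), but the base case should be stated on the open interval. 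Second, ``$G_n'=0$ a.e.\ plus continuity implies constant'' is false in general (Cantor function); what rescues the step is that $G_n$ is a locally finite sum of splines, hence piecewise polynomial and locally Lipschitz, so it is absolutely continuous and the fundamental theorem of calculus applies. In fact only the step $n=2$ needs this care: for $n\ge 3$ the function $G_{n-1}$ is continuous, the recursion holds everywhere, and $G_n'$ vanishes identically on the closed interval. With these two repairs your induction is a complete, self-contained proof of the lemma that this paper only cites.
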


Let $e_m$ denote the column
vector with $e_m(m)=1$ and $e_m(\ell)=0$ for $\ell\neq m$. 
We first need a cancellation property for the Zibulski-Zeevi matrix $\Phi^{N_n}(x,\gamma)$ of
B-splines. The entries of the matrix $\Phi^{N_n}(x,\gamma)$
are expressed in terms for $Z_{1/b}N_n$. However,
the sum of every $\mu$th column can be
expressed in terms of $Z_{1/(rb)}N_n$. This is useful since $rb$ is
assumed to be close to an integer and since, by Lemma~\ref{lem:partly-pou}, we have
good control of the zeros of the Zak
transform $Z_{1/c} N_n$ for any $c$ close to an
integer. From these ideas, we can prove the following cancellation lemma.

\begin{lemma}
  \label{lem:ZZ-structure}
  Let $n \in\N$. Let $\mu,r, k \in \N$ be as in \Cref{def:point-set-only-mu}. Set $p=r\mu-k$, $q=r\mu$, and $b_0 = \mu - k/r$.
  Let $b>3/2$ be given such that $\abs{b-b_0} \le \frac{1}{nr}$, i.e., $\abs{\sfrac{rb}} \le \frac1n$.
  \begin{enumerate}[label=(\roman*)]
    \item If $\sfrac{rb}\ge 0$, there exists a
          constant $K$ such that 
          \[
            \sum_{\ell=0}^{r-1} \phi_{\ell \mu}^{N_n}(\frac{x}{r},0) = K e_0 \in \C^p
            \qquad \text{for $x \in \bigcup_{m \in
                  \Z}\itvcc{m+n\sfrac{rb}, m+1}$.}
          \]
          \label{item:ZZ-cancellation-pos}
    \item If $\sfrac{rb}\le 0$, there exists a
          constant $K$ such that 
          \[
            \sum_{\ell=0}^{r-1} \phi_{\ell \mu}^{N_n}(\frac{x}{r},0) = K e_0 \in \C^p
            \qquad \text{for } x \in \bigcup_{m \in
              \Z}\itvcc{m, m+1+n\sfrac{rb}}
          \]
          \label{item:ZZ-cancellation-neg}
  \end{enumerate}
\end{lemma}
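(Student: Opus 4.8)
The plan is to evaluate the column sum $\sum_{\ell=0}^{r-1}\phi_{\ell\mu}^{N_n}(x/r,0)$ entry by entry and to show that, viewed as a vector in $\C^p$, all but the zeroth entry vanish while the zeroth is constant on the relevant domain. For the entry with index $j\in\set{0,\dots,p-1}$, the definition of the Zibulski--Zeevi matrix together with $p=r\mu-k$ and $q=r\mu$ turns the first argument of $Z_{1/b}N_n$ into $x/r-\ell\mu\,p/q=(x+\ell k)/r-\ell\mu$. Pulling the integer shift $-\ell\mu$ through the quasi-periodicity relation $(Z_{1/b}N_n)(u+c,\gamma)=\myexp{2\pi i c\gamma}(Z_{1/b}N_n)(u,\gamma)$, $c\in\Z$, gives
\[
  \phi_{\ell\mu}^{N_n}(\tfrac xr,0)_j = p^{-1/2}\myexp{-2\pi i\ell\mu j/p}(Z_{1/b}N_n)\Bigl(\tfrac{x+\ell k}{r},\tfrac jp\Bigr).
\]

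First I would collapse the $r$-term sum into a single Zak transform at the finer scale $1/(rb)$ by means of the polyphase identity
\[
  (Z_{1/(rb)}N_n)(u,v) = r^{-1/2}\sum_{s=0}^{r-1}\myexp{2\pi i sv}(Z_{1/b}N_n)\Bigl(\tfrac{u-s}{r},rv\Bigr),
\]
which follows from splitting the defining Zak series of $Z_{1/(rb)}N_n$ according to residues modulo $r$. Since $\gcd(k,r)=1$, the map $\ell\mapsto \ell k\bmod r$ is a bijection of $\set{0,\dots,r-1}$; reducing $\ell k$ modulo $r$ (again extracting quasi-periodicity phases) and carefully bookkeeping the accumulated phases then matches the column sum to the right-hand side above evaluated at $u=x$, yielding
\[
  \sum_{\ell=0}^{r-1}\phi_{\ell\mu}^{N_n}(\tfrac xr,0)_j = \sqrt{\tfrac rp}\,(Z_{1/(rb)}N_n)\Bigl(x,\tfrac{jm}{p}\Bigr), \qquad m=\tfrac{1+k^{-1}p}{r},
\]
where $k^{-1}$ is an integer inverse of $k$ modulo $r$. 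One verifies $m\in\Z$ and $rm\equiv 1\pmod p$, so $\gcd(m,p)=1$ and $j\mapsto jm\bmod p$ permutes $\set{0,\dots,p-1}$. Hence it remains to show that $(Z_{1/(rb)}N_n)(x,j'/p)=0$ for $j'=1,\dots,p-1$ and is a constant for $j'=0$, for $x$ in the stated domain.

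The input from \Cref{lem:partly-pou} and the hypothesis $\abs{\sfrac{rb}}\le 1/n$ (so that $rb$ is close to the integer $p=\round{rb}$) enters here. Expanding the Zak series and grouping the summation index modulo $p$ gives
\[
  (Z_{1/(rb)}N_n)(x,\tfrac{j'}{p}) = (rb)^{-1/2}\sum_{\rho=0}^{p-1}\myexp{2\pi i\rho j'/p}\,J_\rho(x), \qquad J_\rho(x)=\sum_{\sigma\in\Z}N_n\Bigl(\tfrac{x-\rho-p\sigma}{rb}\Bigr).
\]
Each $J_\rho$ is a partition-of-unity sum for $N_n$ under the dilation $d=rb/p$, and since $\sfrac d=\sfrac{rb}/p$ satisfies $\abs{\sfrac d}\le 1/(np)\le 1/n$, \Cref{lem:partly-pou} applied with dilation $d$ shows that $J_\rho(x)$ equals a single constant $C$ whenever $(x-\rho)/p$ lies in the partition-of-unity intervals of that lemma. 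Because $pn\sfrac d=n\sfrac{rb}$, these intervals pull back to the domain $x\in\bigcup_m\itvcc{m+n\sfrac{rb},m+1}$ appearing in \ref{item:ZZ-cancellation-pos}.

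The hard part, and the crux of the argument, is to check that on this domain \emph{all} residue classes $\rho=0,\dots,p-1$ fall into the partition-of-unity window \emph{simultaneously}, so that $J_\rho(x)=C$ for every $\rho$. The gaps in the window for residue $\rho$ are the translates modulo $p$ of the interval $(0,n\sfrac{rb})$, so the claim amounts to showing that $(x-\rho)\bmod p$ never lands in $(0,n\sfrac{rb})$ for $x$ in the stated domain; writing $x=M+\eta$ with $M\in\Z$ and $\eta\in\itvcc{n\sfrac{rb},1}$, this follows because the integer part $(M-\rho)\bmod p$ contributes a nonnegative shift while $\eta\ge n\sfrac{rb}$ already keeps the fractional contribution outside the gap. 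Once $J_\rho(x)=C$ for all $\rho$, root-of-unity cancellation gives $\sum_\rho\myexp{2\pi i\rho j'/p}C=0$ for $j'\ne 0$ and $pC$ for $j'=0$; together with the permutation $j\mapsto jm\bmod p$ this yields $\sum_{\ell=0}^{r-1}\phi_{\ell\mu}^{N_n}(x/r,0)=Ke_0$ with $K=\sqrt{r/p}\,(rb)^{-1/2}pC$. Part~\ref{item:ZZ-cancellation-neg} (the case $\sfrac{rb}\le 0$) is proved identically using the second interval family of \Cref{lem:partly-pou}.
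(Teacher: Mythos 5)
Your proposal is correct and is essentially the paper's own argument: both proofs reduce the column sum to partition-of-unity sums $\sum_{\sigma\in\Z} N_n\bigl((x-\rho-p\sigma)/(rb)\bigr)$ over residues $\rho$ modulo $p$, apply \Cref{lem:partly-pou} with dilation $rb/p$ (using $\sfrac{rb/p}=\sfrac{rb}/p$ and $\abs{\sfrac{rb}}\le 1/n$ so the lemma applies and the intervals rescale to $\bigcup_m\itvcc{m+n\sfrac{rb},m+1}$), and finish with root-of-unity cancellation over the $p$-th roots of unity. The only difference is organizational: you formalize the intermediate step as an explicit polyphase identity plus the frequency permutation $j\mapsto jm \bmod p$ (your modular claims $m\in\Z$, $rm\equiv 1$ and $km\equiv\mu \pmod p$ all check out, so the phases do match), which is exactly the strategy the paper announces before the lemma --- that sums of every $\mu$-th column are values of $Z_{1/(rb)}N_n$ --- whereas the paper's computation keeps the frequency $k/p$ fixed and instead merges the column index $\ell$ with the Zak summation index directly, using $\gcd(r,p)=1$ for the domain intersection where you use it for the permutation.
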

\begin{proof}
  We will only prove \ref{item:ZZ-cancellation-pos} as the proof of \ref{item:ZZ-cancellation-neg} is similar. Note that $\round{rb}=p$. Hence, $rb$ is ``close'' to the integer $p$ by $0 \le \sfrac{rb} \le \frac1n$.

  For each $k=0,1, \dots, p-1$ we compute:
  \begin{align*}
    \sum_{\ell=0}^{r-1} \Phi^{N_n}(x,0)_{k,\ell \mu} & =
    \sum_{\ell=0}^{r-1} p^{-1/2} Z_{\tfrac{1}{b}}N_n(x-\ell \mu
    \frac{p}{q},\frac{k}{p})                                                             \\ &= (bp)^{-1/2} \sum_{\ell=0}^{r-1}
    \sum_{z \in \Z} N_n(\tfrac{1}{b}(x-
    \frac{p}{r}\ell-z)) \myexp{2\pi i z \frac{k}{p}}
    \intertext{Write $z=pv+w$ for $v\in \Z$ and $w=0,1,\dots, p-1$, and
      continue:}
                                                     & = (bp)^{-1/2} \sum_{\ell=0}^{r-1}
    \sum_{v \in \Z} \sum_{w=0}^{p-1} N_n(\tfrac{1}{rb}(rx-p\ell
    -rpv-rw)) \myexp{2\pi i w \frac{k}{p}}
    \intertext{Write $z=rv+\ell$ for $v\in \Z$ and $\ell=0,1,\dots, r-1$, and
      continue:}
                                                     & = (bp)^{-1/2} \sum_{w=0}^{p-1}
    \sum_{z \in \Z}  N_n(\tfrac{1}{rb}(r(x-w)-pz)) \myexp{2\pi i w \frac{k}{p}}
  \end{align*}
  We claim that there exists a constant $d \in \R$ such that for every
  $w\in \set{0,1,\dots,p-1}$:
  \[
    \sum_{z \in \Z}  N_n(\tfrac{1}{rb}(r(x-w)-pz)) = d \qquad \text{for }  rx \in
    \bigcup_{m \in \Z} \itvcc{m+n\sfrac{rb}, m+1}.
  \]
  Fix $w \in \set{0,1,\dots,p-1}$ for a moment. Recall that $p = \round{rb}$.
  Since
  \[
    \frac{1}{rb}(r(x-w)-pz)=\frac{\round{rb}}{rb}\biggl(\frac{r(x-w)}{\round{rb}}+z
    \biggr),
  \]
  it follows from Lemma~\ref{lem:partly-pou} that $\sum_{z \in \Z}  N_n(\tfrac{1}{rb}(r(x-w)-pz))$ is constant for
  \[
    \frac{r(x-w)}{\round{rb}} \in \bigcup_{m\in \Z}\itvcc{m+n\sfrac{\tfrac{rb}{\round{rb}}}, m+1},
  \]
  that is, for
  \[
    \frac{r}{p} x \in \left(\bigcup_{m\in \Z}\itvcc{m+n\frac{\sfrac{rb}}{p}, m+1}\right)+\frac{r}{p}w.
  \]
  Hence, since $n\frac{\sfrac{rb}}{p}\le \frac{1}{p}$ and since $r$ and $p$ are relatively prime, we see that $\sum_{z \in \Z}  N_n(\tfrac{1}{rb}(r(x-w)-pz))$ is constant
  for
  \[
    \frac{r}{p} x \in \bigcap_{w=0}^{p-1}\left(\bigcup_{m\in
      \Z}\itvcc{m+n\frac{\sfrac{rb}}{p}, m+1}\right)+\frac{r}{p}w =
    \bigcup_{m\in \Z}\itvcc{\frac{m+n\sfrac{rb}}{p}, \frac{m+1}{p}}.
  \]
  This completes the proof of the claim. 

  Hence, for $x \in \bigcup_{m \in
      \Z}\itvcc{m+n\sfrac{rb}, m+1}$, we can
  conclude:
  \begin{equation*}
    \sum_{\ell=0}^{r-1} \Phi^{N_n}(x/r,0)_{k,\ell \mu}
    =  (bp)^{-1/2} \sum_{w=0}^{p-1} d \myexp{2\pi
      i w \tfrac{k}{p}} = \begin{cases}
      b^{-1/2} d p^{1/2} & k=0,      \\
      0                  & k \neq 0.
    \end{cases}
  \end{equation*}
\end{proof}

With the cancellation property in Lemma~\ref{lem:partly-pou} at hand,
we can follow \cite{GrochenigPartitions2015} to complete the proof of \Cref{thm:new-hyperbolas-frame-set-Bn}.

\begin{proof}[Proof of Theorem~\ref{thm:new-hyperbolas-frame-set-Bn}]
  Assume that $\sfrac{rb} \ge 0$. Recall that $q-p+1\le \mu-1$.
  The ``samples'' of
  $\Phi^{N_n}(x,0)$ in the $x$ variable in the columns indexed by
  $s\in \{0,1,\dots,q-p+1\}$ are within an interval of length
  $\frac{q-p+1}{q}$ (up to quasi-periodicity).
  Since,  by assumption,
  \[
    \frac{q-p+1}{\mu}\le (1-n\sfrac{rb})
  \]
  or equivalently
  \[
    \frac{q-p+1}{q}\le \frac{1}{r} (1-n\sfrac{rb}),
  \]
  we see from \Cref{lem:ZZ-structure} that for  $s\in \{0,1,\dots,q-p+1\}$
  \[
    \sum_{\ell=0}^{r-1} \Phi^{N_n}(x_0,0)_{k,\ell \mu+s} =
    \begin{cases}
      K_s & k=0  ,    \\
      0   & k \neq 0,
    \end{cases}
  \]
  for some $x_0$. Define $v_s=(1/K_s)\cdot \sum_{\ell=0}^{r-1}e_{l\mu+s}$ for
  $s=0,1,\dots,\mu-1$. Then, $\Phi^{N_n}(x_0,0)v_s=\delta_0 \in \C^p$ for  $s\in
    \{0,1,\dots,q-p+1\}$.

  We have found $q-p+1$ linear independent vectors
  $v_s-v_0$, $s=1,2,\dots,q-p+1$, in the kernel of $\Phi^{N_n}(x_0,0)$.
  From the rank-nullity theorem it then follows that:
  \[
    \dim{(\im{(\Phi^{N_n}(x_0,0))})}= q-\dim{(\ker{(\Phi^{N_n}(x_0,0))})}
    \le q-(q-p+1)=p-1.
  \]
  Hence, $\set{\phi^{N_n}_\ell(x_0,0)}_{\ell=0}^{q-1}$ is not a spanning set
  for $\C^p$, in particular, it is not a frame for $\C^p$.
  By Theorem~\ref{thm:ZZ_singular_values}, we conclude that $\gaborG{N_n}$
  is not a frame for $L^2(\R)$. The proof of the case $\sfrac{rb} < 0$ is similar.
\end{proof}

\begin{remark}
  In the proof of \Cref{thm:new-hyperbolas-frame-set-Bn}, we tacitly used that the Zak transform $Z_\lambda N_n$ is continuous. This is not true for $n=1$. However, $Z_\lambda N_1$ is continuous with respect to the second variable and piecewise continuous with respect to the first variable with finitely many jump discontinuities. It follows that we can still conclude that the minimum of the smallest singular value of the Zibulski-Zeevi matrix $\Phi^{N_1}(x,\gamma)$ over $(x,\gamma)\in \itvco{0,1}^2$ is zero, which is the conclusion we need.
\end{remark}
\begin{remark}
  The obstructions in \Cref{thm:new-hyperbolas-frame-set-Bn} have been known to the author for a number of years. They were originally presented at \emph{The International Workshop on Operator Theory and Applications (IWOTA)} in 2016, but have not previously appeared in print.
\end{remark}

Let us return to the conjecture in \eqref{eq:con_hyperbelStykke}, now a theorem in \cite{MR4917072} on frame obstructions for the hat spline $N_2$. To compare \eqref{eq:con_hyperbelStykke} with our result, we must restrict \Cref{thm:new-hyperbolas-frame-set-Bn} to the special setting of $n=2$, $r=2$, and odd $\mu$. For $ab=p/q$, the bound in \eqref{eq:con_hyperbelStykke} can be written as $\abs{b - b_0} \le \frac{1}{nr} \left(\mu  - k\right)/\mu$, whereas \Cref{thm:new-hyperbolas-frame-set-Bn} gives $\abs{b - b_0} \le \frac{1}{nr} \left(\mu  - k - 1\right)/\mu$ (with $nr=4$). The segments from \eqref{eq:con_hyperbelStykke} are thus slightly longer by $1/(4\mu)$ in each $b$-direction. This indicates that the obstructions in \Cref{thm:new-hyperbolas-frame-set-Bn} are suboptimal, which is not in contradiction with the method: the proof does not \emph{determine} the kernel of the Zibulski-Zeevi matrix, it only exhibits $q-p+1$ linearly independent vectors in the kernel.

We end the paper with a general remark on the nature of the known general obstructions for B-splines and other functions that generate partitions of unity. For functions $g \in M^1(\R)$ that generate partitions of unity, there are two types of obstructions to the frame property:
\begin{enumerate}[label=(\Roman*)]
  \item For $b=2,3,\dots$ and any $a>0$ the Gabor system $\gaborG{g}$ is not a frame \cite{MR2657413,MR2045812,MR1752589}  \label{item:integer-obstruction},
  \item For $(a,b)$ in the point set $P$ the Gabor system $\gaborG{g}$ is not a frame \cite{GrochenigPartitions2015} \label{item:point-obstruction}.
\end{enumerate}
When viewed along hyperbolas $ab=p/q<1$, these two types of obstructions are both point obstructions. For B-splines, the hyperbolic obstructions in \cite{MR4917072} are ``grown'' out of the first type of obstruction \ref{item:integer-obstruction} along $ab=p/q<1$, whereas the obstructions in \Cref{thm:new-hyperbolas-frame-set-Bn} are ``grown'' out of the second type of obstruction \ref{item:point-obstruction}, also along $ab=p/q<1$. In the opposite direction, as the B-spline order $n$ increases, these hyperbolic obstructions contract, degenerating to the point obstructions 
as $n \to \infty$. To see why B-splines admit these hyperbolic extensions of point obstructions, note first that any function $g$ that generates a partition of unity $\sum_{k \in \Z} g(\cdot + k) = 1$ will, for integer $c$, also satisfy $\sum_{k \in \Z} g((\cdot+k)/c) = c$. B-splines further satisfy a partly partition of unity property in 
Lemma~\ref{lem:partly-pou} for $c$ close to an integer -- that is, $\sum_{k \in \Z} N_n((\cdot+k)/c)$ is constant on some subinterval of $\itvco{0,1}$ -- which is the key additional property that allows  the point obstructions to be extended into hyperbolic segments. In fact, any sufficiently nice function $g$ that generates a partition of unity and retains such a partly partition of unity under dilations $(1/c)\Z$ for $c \approx 1$ will have similar hyperbolic obstructions.

\section*{Declaration of AI use}
During the preparation of this work the author used Gemini (Google) and GitHub Copilot in order to improve the readability and grammatical accuracy of the manuscript, proofread and check mathematical arguments, and assist with writing the code to produce the figures. After using these tools/services, the author reviewed and edited the content as needed and takes full responsibility for the content of the published article.

\def\cprime{$'$} \def\cprime{$'$} \def\cprime{$'$} \def\cprime{$'$}
  \def\uarc#1{\ifmmode{\lineskiplimit=0pt\oalign{$#1$\crcr
  \hidewidth\setbox0=\hbox{\lower1ex\hbox{{\rm\char"15}}}\dp0=0pt
  \box0\hidewidth}}\else{\lineskiplimit=0pt\oalign{#1\crcr
  \hidewidth\setbox0=\hbox{\lower1ex\hbox{{\rm\char"15}}}\dp0=0pt
  \box0\hidewidth}}\relax\fi} \def\cprime{$'$} \def\cprime{$'$}
  \def\cprime{$'$} \def\cprime{$'$} \def\cprime{$'$} \def\cprime{$'$}


\begin{thebibliography}{10}

\bibitem{MR4484792}
A.~G.~D. Atindehou, C.~Frederick, Y.~B. Kouagou, and K.~A. Okoudjou.
\newblock On the frame set of the second-order {B}-spline.
\newblock {\em Appl. Comput. Harmon. Anal.}, 62:237--250, 2023.

\bibitem{MR1896414}
P.~G. Casazza and N.~J. Kalton.
\newblock Roots of complex polynomials and {W}eyl-{H}eisenberg frame sets.
\newblock {\em Proc. Amer. Math. Soc.}, 130(8):2313--2318, 2002.

\bibitem{ChristensenNew2014}
O.~Christensen.
\newblock Six (seven) problems in frame theory.
\newblock In A.~I. Zayed and G.~Schmeisser, editors, {\em New Perspectives on
  Approximation and Sampling Theory}, Applied and Numerical Harmonic Analysis,
  pages 337--358. Springer International Publishing, 2014.

\bibitem{MR3495345}
O.~Christensen.
\newblock {\em An introduction to frames and {R}iesz bases}.
\newblock Applied and Numerical Harmonic Analysis. Birkh\"auser/Springer,
  [Cham], second edition, 2016.

\bibitem{MR3398948}
O.~Christensen, H.~O. Kim, and R.~Y. Kim.
\newblock On {G}abor frames generated by sign-changing windows and {B}-splines.
\newblock {\em Appl. Comput. Harmon. Anal.}, 39(3):534--544, 2015.

\bibitem{MR3545108}
X.-R. Dai and Q.~Sun.
\newblock The {$abc$}-problem for {G}abor systems.
\newblock {\em Mem. Amer. Math. Soc.}, 244(1152):ix+99, 2016.

\bibitem{MR1066587}
I.~Daubechies.
\newblock The wavelet transform, time-frequency localization and signal
  analysis.
\newblock {\em IEEE Trans. Inform. Theory}, 36(5):961--1005, 1990.

\bibitem{MR1752589}
V.~Del~Prete.
\newblock Estimates, decay properties, and computation of the dual function for
  {G}abor frames.
\newblock {\em J. Fourier Anal. Appl.}, 5(6):545--562, 1999.

\bibitem{MR2657413}
V.~Del~Prete.
\newblock On a necessary condition for {B}-spline {G}abor frames.
\newblock {\em Ric. Mat.}, 59(1):161--164, 2010.

\bibitem{MR2031050}
H.~G. Feichtinger and N.~Kaiblinger.
\newblock Varying the time-frequency lattice of {G}abor frames.
\newblock {\em Trans. Amer. Math. Soc.}, 356(5):2001--2023 (electronic), 2004.

\bibitem{MR4793698}
R.~Ghosh and A.~Antony~Selvan.
\newblock On {G}abor frames generated by {B}-splines, totally positive
  functions, and {H}ermite functions.
\newblock {\em Appl. Numer. Math.}, 207:1--23, 2025.

\bibitem{MR4917072}
R.~Ghosh and A.~A. Selvan.
\newblock Obstructions for {G}abor frames of the second-order {B}-spline.
\newblock {\em Adv. Comput. Math.}, 51(3):Paper No. 27, 39, 2025.

\bibitem{GrochenigPartitions2015}
K.~Gr{\"o}chenig.
\newblock Partitions of unity and new obstructions for {G}abor frames.
\newblock arXiv:1507.08432, unpublished manuscript (2015).

\bibitem{MR1843717}
K.~Gr{\"o}chenig.
\newblock {\em Foundations of time-frequency analysis}.
\newblock Applied and Numerical Harmonic Analysis. Birkh\"auser Boston, Inc.,
  Boston, MA, 2001.

\bibitem{MR3232589}
K.~Gr{\"o}chenig.
\newblock The mystery of {G}abor frames.
\newblock {\em J. Fourier Anal. Appl.}, 20(4):865--895, 2014.

\bibitem{MR2045812}
K.~Gr{\"o}chenig, A.~J. E.~M. Janssen, N.~Kaiblinger, and G.~E. Pfander.
\newblock Note on {$B$}-splines, wavelet scaling functions, and {G}abor frames.
\newblock {\em IEEE Trans. Inform. Theory}, 49(12):3318--3320, 2003.

\bibitem{MR2407006}
Q.~Gu and D.~Han.
\newblock When a characteristic function generates a {G}abor frame.
\newblock {\em Appl. Comput. Harmon. Anal.}, 24(3):290--309, 2008.

\bibitem{MR2803840}
X.-G. He and K.-S. Lau.
\newblock On the {W}eyl-{H}eisenberg frames generated by simple functions.
\newblock {\em J. Funct. Anal.}, 261(4):1010--1027, 2011.

\bibitem{MR1955931}
A.~J. E.~M. Janssen.
\newblock Zak transforms with few zeros and the tie.
\newblock In {\em Advances in {G}abor analysis}, Appl. Numer. Harmon. Anal.,
  pages 31--70. Birkh\"auser Boston, Boston, MA, 2003.

\bibitem{MR3218799}
T.~Kloos and J.~St{\"o}ckler.
\newblock Zak transforms and {G}abor frames of totally positive functions and
  exponential {B}-splines.
\newblock {\em J. Approx. Theory}, 184:209--237, 2014.

\bibitem{MR3572909}
J.~Lemvig and K.~Haahr~Nielsen.
\newblock Counterexamples to the {B}-spline conjecture for {G}abor frames.
\newblock {\em J. Fourier Anal. Appl.}, 22(6):1440--1451, 2016.

\bibitem{MR1448221}
M.~Zibulski and Y.~Y. Zeevi.
\newblock Analysis of multiwindow {G}abor-type schemes by frame methods.
\newblock {\em Appl. Comput. Harmon. Anal.}, 4(2):188--221, 1997.

\end{thebibliography}

\end{document}